\theoremstyle{plain}
\newtheorem{theorem}{Theorem}[section]
\newenvironment{taggedtheorem}[1]
 {\taggedtheoremx}
 {\endtaggedtheoremx}
\newtheorem{corollary}[theorem]{Corollary}
\newtheorem{induction hypothesis}[theorem]{Induction Hypothesis}
\newtheorem{lemma}[theorem]{Lemma}
\newtheorem{proposition}[theorem]{Proposition}
\numberwithin{equation}{section}
\theoremstyle{definition}
   \newtheorem*{funding statement}{Funding statement}
\theoremstyle{remark}
\newtheorem{remark}[theorem]{Remark}
\newtheorem*{acknowledgements}{Acknowledgments}
\newcommand{\mytag}[2]{%
  \text{#1}%
  \@bsphack
  \begingroup
    \@onelevel@sanitize\@currentlabelname
    \edef\@currentlabelname{%
      \expandafter\strip@period\@currentlabelname\relax.\relax\@@@%
    }%
    \protected@write\@auxout{}{%
      \string\newlabel{#2}{%
        {#1}%
        {\thepage}%
        {\@currentlabelname}%
        {\@currentHref}{}%
      }%
    }%
  \endgroup
  \@esphack
}
\begin{document}

\title[The local converse theorem]{The local converse theorem for odd special orthogonal and symplectic groups in positive characteristic}
\author[]{Yeongseong Jo}

\address{Department of Mathematics Education, Ewha Womans University, Seoul 03760, Republic of Korea}
\email{\href{mailto:yeongseongjo@outlook.com}{yeongseongjo@outlook.com};\href{mailto:yeongseong.jo@ewha.ac.kr}{yeongseong.jo@ewha.ac.kr}}

\subjclass[2020]{Primary 11F70; Secondary 22E50}
\keywords{Local converse theorem, Equality of local gamma factors, Local Rankin--Selberg integrals}

\begin{abstract}
Let $F$ be a non-archimedean local field of characteristic different from $2$ and $G$ be either an odd special orthogonal group ${\rm SO}_{2r+1}(F)$ or
a symplectic group ${\rm Sp}_{2r}(F)$. In this paper, we establish the local converse theorem for $G$. Namely, for given two irreducible admissible generic representations of $G$
with the same central character, if they have the same local gamma factors twisted by irreducible supercuspidal representations of ${\rm GL}_n(F)$ for all $1 \leq n \leq r$
with the same additive character, these representations are isomorphic. Using the theory of Cogdell, Shahidi, and Tsai on partial Bessel functions and the classification of irreducible generic representations, we break the barrier on the rank of twists $1 \leq n \leq 2r-1$ in the work of Jiang and Soudry, and extend the result of Q. Zhang, which was  achieved for all supercuspidal representations in characteristic $0$.
\end{abstract}

\maketitle

\section{Introduction}

Let $G$ be either an odd special orthogonal group ${\rm SO}_{2r+1}(F)$ or
a symplectic group ${\rm Sp}_{2r}(F)$ over a non-archimedean local field $F$ of characteristic different from $2$.
We study Rankin--Selberg integrals attached to a pair $(\pi,\tau)$ of irreducible admissible generic representations 
of $G$ and ${\rm GL}_n(F)$. The integrals are constructed by Soudry for $G={\rm SO}_{2r+1}(F)$ \cite{Sou93}, and
 by Gelbart and Piatetski-Shapiro with $r=n$ \cite{GP87}, and by Ginzburg, Rallis, and Soudry with $r \neq n$ \cite{GRS98} for $G={\rm Sp}_{2r}(F)$.
 We fix a nontrivial additive character $\psi$ of $F$. The local $\gamma$-factor $\gamma(s,\pi \times \tau,\psi)$ of $\pi \times \tau$ arises as a proportionality factor between Rankin--Selberg integrals or Shimura type integrals in the symplectic case.
 The purpose of this paper is to present the following type of the local converse theorem stated as local $\gamma$-factors in contrast to global $L$-functions appearing in the global converse theorem \cite[\S 3-\S 4]{Cog14}. Another new contribution in this paper is considering of all generic representations instead of generic supercuspidal representations.

\begin{taggedtheorem}{A}
\label{main}
Let $\pi$ and $\pi'$ be irreducible admissible generic representations of $G$ with the common central character $\omega$. If
 \[
 \gamma(s,\pi \times \tau,\psi)=\gamma(s,\pi' \times \tau,\psi)
 \]
 for all irreducible generic supercuspidal representations $\tau$ of ${\rm GL}_n(F)$ and for all $n$ with $1 \leq n \leq r$, then $\pi \cong \pi'$. 
\end{taggedtheorem}

In the ${\rm SO}_{2r+1}(F)$ case, the local converse theorem in characteristic zero has been proved in \cite{JS13} and \cite{Cha19} or \cite{JL18}.
In the ${\rm Sp}_{2r}(F)$ case, the local converse theorem for generic supercuspidal representations was proved in \cite{Zha18} by twisting generic representations of ${\rm GL}_n(F)$.
In this paper, we extend the above result to al generic representations of $G$ using fewer twists (only supercuspidal representations of ${\rm GL}_n(F)$) by
using the multiplicativity of gamma factors and by analyzing the poles and zeros of gamma factors following the strategies of \cite{JS13} and \cite{Zha18,Zha19}.
\par

Let $\widetilde{\rm Sp}_{2r}$ be the  metaplectic (double) cover of ${\rm Sp}_{2r}(F)$. In pioneering work of Jiang and Soudry \cite{JS03}, the local converse theorem has been settled for $G={\rm SO}_{2r+1}(F)$ twisted by ${\rm GL}_n(F)$ with $1 \leq n \leq 2r-1$
over $p$-adic field. To obtain this result, Jiang and Soudry combined global weak functoriality from ${\rm SO}_{2r+1}(F)$ to ${\rm GL}_{2r}(F)$ with local descent  \cite{JS13}  (so-called the {\it local backward lifting} \cite{JS03}) from ${\rm GL}_{2r}(F)$ to $\widetilde{\rm Sp}_{2r}$ and then the local theta correspondence from $\widetilde{\rm Sp}_{2r}$ to  ${\rm SO}_{2r+1}(F)$ to
be able to pull Henniart's local converse theorem \cite{Hen93} back from ${\rm GL}_{2r}(F)$ to ${\rm SO}_{2r+1}(F)$. As addressed in the introduction of Jacquet--Liu's paper \cite{JL18}, our main result is already well-known for ${\rm SO}_{2r+1}(F)$ over $p$-adic fields $F$, because it is reduced to the twisted up to ${\rm GL}_r(F)$ version local converse theorem for ${\rm GL}_{2r}(F)$ in the exact same way. 
This kind of local converse theorems for ${\rm GL}_{n}(F)$ 
has been first formulated by Jacquet, Piatetski-Shapiro, and Shalika \cite[\S 7]{Cog14} which is known as ``Jacquet's conjecture" \cite{Cha19,JL18,JS03,Zha18,Zha19} for decades and it is independently confirmed by Chai \cite{Cha19}, and Jacquet and Liu \cite{JL18}. Afterwords, Morimoto \cite{Mor18} adapted this approach to
local converse theorems for even unitary groups ${\rm U}_{2r}(F)$, which uses local/global functoriality and descent method.

\par
Another approach to the aforementioned converse theorem for ${\rm GL}_n(F)$ exploits Rankin--Selberg integrals and Fourier inversion formula,
whose method has its origin in the work of Henniart \cite{Hen93} and Chen \cite{Che06}. Subsequently, Jacquet and Liu \cite{JL18} generalized the method
to resolve Jacquet's conjecture.

\par 
We are guided by the analogy in modern number theory between number fields and (global) function fields. 
Based on our knowledge in number field cases, the surjectivity of  local functorial lifting maps by Lomel\'{\i} \cite[\S 9]{Lom09} should rely on the theory of local descent of Ginzburg, Rallis, and Soudry  \cite{JS13} in cases where $G$ is ${\rm SO}_{2r+1}(F)$ over local function fields $F$ (see \cite[Proof of Proposition 6.1]{JS03}).
To date, the local descent map from general linear groups to classical groups \cite{JS13} is not written up in this generality in the literature, while it is our belief that the local descent theory should continue to work over positive characteristics. 
This barrier is addressed by Gan and Lomel\'{\i} \cite{GL18} as well, when they outline that the surjectivity of conjectural local Langlands reciprocity maps from analytic sides to arithmetic (Galois) sides given in \cite[(7.2)]{GL18}
becomes one of the immediate consequences of that of local functorial transfer maps. Consequently, an alternative direction that they took at that time was to impose a working hypothesis \cite[\S 7.5 and Theorem 7.5]{GL18} weaker than the surjectivity, which is sufficient for the application therein. 
The local descent map turns into a stumbling block which still persists in the symplectic cases, as the irreducibility of the local descent map is amount to the local converse theorem \citelist{\cite{JS13}*{Theorem A7}\cite{Zha18}*{Introduction}}. Somewhat surprisingly, the equivalent statement is conditionally valid under hypothetical computations, and to the best of our knowledge the detailed proof might be included in their forthcoming paper \cite[p.770-771]{JS13} (cf. \cite[Introduction]{Zha18}).

\par
To be absolutely rigorous, we avoid any usage of local descent methods, and adapt the theory of partial Bessel functions developed by Cogdell, Shahidi, and Tsai \cite{CST17}.
 The partial Bessel function as a form of Howe Whittaker functions (also referred to {\it Howe vectors})
was first introduced by R. Howe \cite{Howe}. Utilized the theory of partial Bessel functions
and Howe vectors, Jacquet's conjecture is independently confirmed by Chai \cite{Cha19}.
 Later, the third method has been prominently refined and improved in a series of work of Q. Zhang \cite{Zha18,Zha19},
 when he settled down local converse theorem of ${\rm Sp}_{2r}(F)$ and ${\rm U}_{2r+1}(F)$ over $p$-adic fields \cite{Zha18,Zha19} as an extension of his Ph.D. thesis \cite{Zha17-1,Zha17-2}, but only for irreducible generic supercupidal representations. The article should belong to a spirit of paper \cite{Zha18,Zha19}
stemmed from the breakthrough work of Cogdell, Shahidi, and Tsai \cite{CST17}. Along the lines of this framework, it is noteworthy to point out that even special orthogonal cases are carried out by A. Hazeltine and Liu \cite{LH22} which is independent of the work of Haan, Kim, and Kwon \cite{HKK23}.

\par
 On top of that, there are clear differences between characteristic zero fields and positive characteristic fields, when dealing with local converse theorems.
 First, multiplicity one theorems needs to be established beforehand in order to define Rankin--Selberg $\gamma$-factors as proportionalities. 
 In the case of $n=r$, the uniqueness of Bessel and Fourier-Jacobi models for positive odd characteristic is recently proved by Mezer \cite{Mez}.
 Second, the definition of local $L$-functions for $G \times {\rm GL}_n(F)$ is unavailable for positive characteristic except even special orthogonal group cases  \cite{Kap13-2}, strictly speaking. 
 To get around this issue, we show that Rankin--Selberg $\gamma$-factors agree with corresponding Langlands--Shahidi $\gamma$-factors. In the time of proof,
 we do not know the existence of the involution of M{\oe}glin, Vign\'eras, and Waldspurger (cf. \cite[\S 5]{Kap13}) unlike characteristic zero fields. 
 For this reason, we directly compare $L$-factors with their counterpart dual $L$-factors over unramified places. 
 After that, we appeal to the holomorphy of Langlands--Shahidi $L$-factors. Third, the rich theory of Langlands--Shahidi method developed prominently by Lomel\'{\i} \cite{Lom09,Lom15,Lom16,Lom17} for positive characteristic fields is crucial in reducing local converse theorem to that for irreducible supercuspidal representations. In contrast to local base change lifts \cite{LH22,Mor18} in characteristic zero cases, which seem to be overkill, we take a straight path. Our strategy of proof is based on Casselman--Shahidi Lemma (Lemma \ref{CS-SO2n}), analytic properties of intertwining operators and Plancherel measure, and the classification of irreducible admissible generic representations of $G$ (Lemma \ref{SOcuspsupport}) accordingly. This leads us to extend the local converse theorem originally built for supercuspidal representations to all irreducible
admissible generic ones of $G$.

\par
As an application to local function fields of positive characteristic in particular, Theorem \ref{main} explains an injectivity of a local functorial lift or local transfer map of irreducible generic supercuspidal representations $\pi$ of $G$
to irreducible supercupidal representations $\Pi$ of ${\rm GL}_R(F)$ defined by Lomel\'{\i} \cite[\S 9]{Lom09} with $R=2r$ if $G={\rm SO}_{2r+1}(F)$ and $R=2r+1$ if $G={\rm Sp}_{2r}(F)$ (Corollaries \ref{SO2n+1-transfer} and \ref{SP2n-transfer}). 
In keeping the spirit of Langlands principle of functoriality described in \cite{JS04}, we expect that the supercuspidality condition can be dropped.
Finally, let us mention that the result of this paper will be used in a forthcoming work by Haan, Kim, and Kwon \cite{HKK23}
on the local converse theorem for even special orthogonal groups ${\rm SO}_{2r}(F)$ and orthogonal groups ${\rm O}_{2r}(F)$.

\par
The overview of this article is the following. Section \ref{RS} contains Rankin--Selberg integrals for ${\rm SO}_{2r+1}(F) \times {\rm GL}_n(F)$,
Bessel models, and unnormalized and normalized $\gamma$-factors. In addition, we show that normalized $\gamma$-factors coincide with the one defined via the Langlands--Shahidi method in Section \ref{SO2n+1-equal-section}.
The proof of local converse theorem for odd special orthogonal cases is devoted in Section \ref{SO2n+1-localconverse}, and symplectic cases are considered in Section \ref{LCSp2r}.

\section{The Local Converse Theorem for SO$(2r+1)$}
\label{RS}

Let $F$ be a non-archimedean local field of characteristic different from $2$.
The base field $F$ is a finite extension of $\mathbb{Q}_p$ or $\mathbb{F}_p((t))$, called a {\it $p$-adic field} in characteristic 0 or a {\it local function field}
in characteristic $p$. We let $\mathcal{O}$ be its ring of integers, and $\mathfrak{p}$ its maximal ideal. 
We let $\varpi$ denote a uniformizer, so that $\mathfrak{p}=(\varpi)$ and $\mathcal{O} \slash \mathfrak{p} \cong \mathbb{F}_q$ for some finite field of order $q$.
We normalize the absolute value by  
$|\varpi|=q^{-1}.$ Throughout this paper, despite stating the main result in the setting of non-archimedean local fields of arbitrary characteristic $\neq 2$,
we will give a detailed and complete proof only in the case of positive characteristic. The rationale for our choice is that 
the literature dealing with Rankin--Selberg methods over function fields is less developed.

\subsection{Structure theory of odd orthogonal groups}
For a positive integer $k$, we inductively define the element $J_k \in {\rm GL}_k(F)$ by
\[
  J_k=\begin{pmatrix} & 1 \\ J_{k-1} &  \end{pmatrix}; \quad J_1=(1).
\]
Let ${\rm SO}_k(F)$ be the special orthogonal group defined by
\[
 {\rm SO}_k(F)=\{ g \in {\rm GL}_k(F) \,|\, \det g=1, \prescript{t}{}{g}J_kg=J_k \}.
\]
Let $B_k=A_kU_k$ be the standard upper triangular Borel subgroup of ${\rm SO}_k(F)$ with the maximal torus $A_k$
and the standard maximal unipotent subgroup $U_k$. Let $Z_k=\{  I_{k}\}$ be the center of ${\rm SO}_k(F)$.

\par
For $k=2n$ even, we let $Q_k$ be the Siegel parabolic subgroup, which has the Levi decomposition
$Q_k=M_k \ltimes V_k$ with
\[
 M_k=\left\{ {\textbf m}_n(a):= \begin{pmatrix} a & \\ & a^{\ast} \end{pmatrix}  \,\middle|\, a \in {\rm GL}_n(F) \right\} \cong {\rm GL}_n(F)
\]
and 
\[
 V_k=\left\{ {\textbf u}_n(b):=\begin{pmatrix} I_n & b \\ & I_n  \end{pmatrix}  \in  {\rm SO}_k(F) \right\},
\]
where $a^{\ast}=J_n \prescript{t}{}{a}^{-1} J_n$. The condition ${\textbf u}_n(b) \in {\rm SO}_k(F)$ forces that $J_nb=-\prescript{t}{}{b}J_n$.

\par
For our convenience, we will abuse the notation by letting $B=B_{2r+1}$, $A=A_{2r+1}$, $Z=Z_{2r+1}$, and $U=U_{2r+1}$.
A prototype element $t \in A$ is of the form
\begin{equation}
\label{maximalelement}
 t= {\rm diag}(a_1,\dotsm,a_r,1,a^{-1}_r,\dotsm,a^{-1}_1), \quad a_1, \dotsm, a_r \in F^{\times}.
\end{equation}

\par
For $X \in {\rm GL}_r(F)$, we denote
\[
  X^{\wedge}=\begin{pmatrix} X && \\ &1& \\  && X^{\ast} \end{pmatrix} \in {\rm SO}_{2r+1}(F).
\]
For $n \leq r$, we denote 
\[
w_{n,r-n}=\begin{pmatrix} & I_n \\ I_{r-n} & \end{pmatrix}^{\wedge}.
\]
Then we have $w_{n,r-n}^{-1}=w_{r-n,n}$.
 We embed ${\rm SO}_{2n}(F) \rightarrow {\rm SO}_{2r+1}(F)$ via
\[
  j \begin{pmatrix} a & b \\ c & d \end{pmatrix}=\begin{pmatrix} I_{r-n} &&&& \\ &a&&b&\\  &&1&&\\ &c&&d&\\ &&&& I_{r-n} \\  \end{pmatrix};\quad  j^{w_{n-r,n}} \begin{pmatrix} a & b \\ c & d \end{pmatrix}
  =\begin{pmatrix} a &&b \\ &I_{2(r-n)+1}& \\ c&&d  \end{pmatrix}.
\]
The maps $j$ \cite[\S 7.1]{LZ22} and $j^{w_{n-r,n}}$ \citelist{\cite{JS03}*{p.772} \cite{Kap15}*{\S 3.1}} are related by $j^{w_{n-r,n}}(g)=j(w_{n-r,n}gw^{-1}_{n-r,n})$, and we regard an element of ${\rm SO}_{2n}(F)$ as an element of ${\rm SO}_{2r+1}(F)$ via the embedding $j$ without any further notice.

\subsection{Uniqueness of Bessel models}
Let $P=MN$ be the Parabolic subgroup of ${\rm SO}_{2r+1}(F)$ with the Levi subgroup
\[
 M=\{ {\rm diag}(a_1,\dotsm,a_{r-n},g,a^{-1}_{r-n},\dotsm,a^{-1}_1) \,|\, a_i \in F^{\times}, g \in {\rm SO}_{2n+1}(F) \} \cong (F^{\times})^{r-n} \times {\rm SO}_{2n+1}(F).
\]
On $N$, we introduce the character
\[
  \psi^{-1}_N(u):= \psi^{-1} \left( \sum_{i=1}^{r-n-1} u_{i,i+1}+u_{r-n,r+1} \right) \;\; \text{for} \;\; u=(u_{i,j})_{1 \leq i,j \leq 2r+1} \in N.
\]
We let
 \[
 H=
 \begin{cases}
 j({\rm SO}_{2n}(F)) \ltimes N & \text{for} \quad 1\leq n  < r, \\
  j({\rm SO}_{2r}(F)) & \text{otherwise.}
 \end{cases}
 \]
In the matrix realization, we have
 \[
   H=\left\{ \begin{pmatrix} u& \ast&\ast&\ast & \ast \\ & a & & b &\ast \\ &  & 1 &  & \ast \\  & c & & d &\ast \\  & & & & u^{\ast} \end{pmatrix} \, \middle|\, u \in U_{{\rm GL}_{r-n}(F)}, \begin{pmatrix} a & b \\ c & d \end{pmatrix} \in {\rm SO}_{2n}(F) \right\}.
 \]
There is a representation $\nu$ of $H$ such that $\nu|_N=\psi_U^{-1}|_N$, and $\nu|_{{\rm SO}_{2n}(F)}=1$ \cite[\S 15]{GGP12}.
The pair $(H,\nu)$ is called a {\it Bessel data} of ${\rm SO}_{2r+1}(F)$. For an irreducible representation $\sigma$ of ${\rm SO}_{2n}(F)$,
$\sigma$ can be regarded as a representation of $H$ via the natural quotient map $H \twoheadrightarrow  j({\rm SO}_{2n}(F))$.
 
 \begin{proposition}[Uniqueness of Bessel models] 
 \label{Bessel}
 Let $\pi$ be an irreducible smooth representation of ${\rm SO}_{2r+1}(F)$ and $\sigma$
an irreducible smooth representation of ${\rm SO}_{2n}(F)$.
 Then we have
 \[
  \dim {\rm Hom}_{H}(\pi,\sigma \otimes \nu) \leq 1.
 \]
 \end{proposition}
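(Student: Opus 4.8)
The plan is to derive this from the local multiplicity-one theorems for Gross--Prasad pairs, arranged in the now-standard Gelfand-pair format. First I would reduce the bound on $\dim{\rm Hom}_H(\pi,\sigma\otimes\nu)$ to a statement about equivariant distributions: since $\pi$ and $\sigma$ are admissible and $H$ is unimodular up to the relevant twist, a routine argument shows it is enough to prove that every distribution on ${\rm SO}_{2r+1}(F)$ which transforms on the left and on the right under $H$ via $\nu$ and $\sigma$ (interpreted as an element of a space ${\rm Hom}_{H\times H}(\mathcal{S}({\rm SO}_{2r+1}(F)),\cdots)$) is invariant under a fixed anti-involution $\iota$ of ${\rm SO}_{2r+1}(F)$; this is the Gelfand--Kazhdan criterion. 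One checks that $\iota$ may be chosen to preserve $H$ and to carry $\nu$ to $\nu^{-1}$, so the proposition is reduced to a purely geometric assertion about the $\iota$-behavior of $(H,\nu)$-bi-equivariant distributions.

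The second step is to peel off the unipotent radical $N$ in the range $1\le n<r$ and reduce to the ``basic'' corank-one pair. Applying Mackey theory (the geometric lemma) with respect to the character $\psi_N^{-1}$ of $N$, analyzing the $j({\rm SO}_{2n}(F))$-orbits on the relevant partial flag variety, and descending one simple root at a time by a Fourier--Jacobi type argument, one reduces uniqueness of $(H,\nu)$-models in ${\rm SO}_{2r+1}(F)$ to uniqueness of Bessel models for the pair $({\rm SO}_{2n+1}(F),{\rm SO}_{2n}(F))$, i.e.\ the case in which the unipotent part is trivial and $\sigma\otimes\nu$ is simply $\sigma$.

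For the corank-one case I would invoke the multiplicity-one theorem of Aizenbud, Gourevitch, Rallis and Schiffmann. One first passes from ${\rm SO}$ to ${\rm O}$: an irreducible smooth representation of ${\rm SO}_{2n}(F)$ either extends to ${\rm O}_{2n}(F)$ in two ways or is the restriction of an irreducible representation of ${\rm O}_{2n}(F)$, and in either case $\dim{\rm Hom}_{{\rm SO}_{2n}(F)}(\pi,\sigma)$ is bounded by the corresponding multiplicity for ${\rm O}_{2n}(F)$; then one applies their theorem that $({\rm O}_{2n+1}(F),{\rm O}_{2n}(F))$ is a strong Gelfand pair. Their proof verifies precisely the Gelfand--Kazhdan condition for the transpose-type anti-involution, the substantive content being the geometric analysis of $({\rm O}_{2n+1}\times{\rm O}_{2n})$-orbits on ${\rm O}_{2n+1}$ via Bernstein's localization principle, reduced to a statement about ${\rm GL}$-invariant distributions on spaces of quadratic forms and settled by an induction on the nilpotent cone.

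The step I expect to be the real obstacle is exactly this geometric heart of the corank-one case: controlling $(H,\nu)$-equivariant distributions supported on the non-generic Bruhat strata, where the stabilizer-character vanishing that disposes of the open stratum fails, so that one must pass to the tangent space at a semisimple point and run the AGRS nilpotent-cone induction. By comparison the preliminary Mackey-theoretic descent peeling off $N$ is routine, though it still requires care over the non-cuspidal constituents. Since all of the above is available in the literature (see in particular \cite{GGP12}), in the body of the paper I would simply cite these results rather than reproduce the argument.
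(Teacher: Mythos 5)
Your strategy --- reduce the general corank case to the equal-rank (corank one) pair via a Mackey/Fourier--Jacobi descent as in \cite[Theorem 15.1]{GGP12}, then settle the equal-rank case by a deep multiplicity-one theorem --- is the same skeleton the paper uses. But there is a genuine gap at the last step. You appeal to the Aizenbud--Gourevitch--Rallis--Schiffmann theorem that $({\rm O}_{2n+1}(F),{\rm O}_{2n}(F))$ is a strong Gelfand pair. That theorem, and the whole Gelfand--Kazhdan / nilpotent-cone induction machinery you describe, is a result over local fields of characteristic zero; it leans on analytic inputs (localization to the tangent space, Harish-Chandra style arguments for invariant distributions, etc.) that do not transfer automatically to positive characteristic. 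The present paper explicitly states its results over non-archimedean local fields of characteristic $\neq 2$ and gives detailed proofs only in positive characteristic, so a citation to AGRS alone does not close the argument. This is exactly why the paper cites Mezer's recent positive-characteristic multiplicity-one theorem \cite[Theorem 1.10]{Mez} for the equal-rank case, rather than AGRS.

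The paper also records a second, more elementary route that you did not mention: for finitely generated representations of Whittaker type one can obtain the one-dimensionality directly from Soudry's analysis \cite[Section 8]{Sou93} (cf.\ \cite[p.\,407]{Kap15}), resting on the Bernstein--Zelevinsky theory of derivatives \cite{BZ76}. That argument is characteristic-free and avoids the distributional machinery entirely, which is part of why it is the safer citation in the function-field setting. If you want to keep your AGRS-based outline, you must either restrict to characteristic zero or replace the AGRS input with Mezer's theorem (or with the derivative-theoretic argument); as written, the ``corank-one heart'' is not available over local function fields.
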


\begin{proof}
The equal rank case $(n=r)$ is treated very recently in \cite[Theorem 1.10]{Mez}. In virtue of \cite[Theorem 15.1]{GGP12}, the general case $(n < r)$ can be deduced from the equal rank
case $(n=r)$. In more general context, the one-dimensionality holds for finitely generated representations of Whittaker type \cite[Section 8]{Sou93} (cf. \cite[p.407]{Kap15}), owing to the theory of derivative of Bernstein and Zelevinsky \cite{BZ76}.
\end{proof}

The Frobenius reciprocity asserts that a non-zero element in ${\rm Hom}_{H}(\pi,\sigma \otimes \nu)$ induces an embedding $\pi \hookrightarrow {\rm Ind}^{{\rm SO}_{2r+1}(F)}_H(\sigma \otimes\nu)$, which is called a {\it Bessel model} of $\pi$ associated with the data $(H,\sigma \otimes \nu)$.

\subsection{The intertwining operator}
Let $n \leq r$ be a positive integer and let $U_{{\rm GL}_{n}(F)}$ denote the upper triangular unipotent subgroup of ${\rm GL}_{n}(F)$. We define the character $\psi_{U_{{\rm GL}_{n}(F)}}$ of $U_{{\rm GL}_{n}(F)}$ by
\[
  \psi^{-1}_{U_{{\rm GL}_{n}(F)}}(u):=\psi^{-1} \left( \sum_{i=1}^{n-1} u_{i,i+1} \right), \quad u=(u_{i,j})_{1 \leq i ,j \leq n} \in U_{{\rm GL}_{n}(F)}.
\]
We let $(\tau,V_{\tau})$ be an irreducible $\psi^{-1}_{U_{{\rm GL}_{n}(F)}}$-generic representation of  ${\rm GL}_n(F)$.
Given a complex number $s \in \mathbb{C}$, we let $\tau_s$ be the representation of $M_{2n}$ defined by
\begin{equation}
\label{tau-s}
  \tau_s({\textbf m}_n(a))=|\det a|^{s-1/2} \tau(a) \quad \text{for $a \in {\rm GL}_{n}(F)$.}
\end{equation}
We form the normalized induced representation $I(s,\tau)={\rm Ind}^{{\rm SO}_{2n}(F)}_{Q_{2n}}(\tau_{s} \otimes {\textbf 1}_{V_{2n}})$.
The space $I(s,\tau)$ may be then deemed as the space of smooth functions $\xi_s : {\rm SO}_{2n}(F) \rightarrow V_{\tau}$ that satisfy
\[
  \xi_s({\textbf m}_n(a){\textbf u}_n(b)g)=\delta^{1/2}_{Q_{2n}}({\textbf m}_n(a))\tau_s({\textbf m}_n(a))\xi_s(g)
\]
for any ${\textbf m}_n(a) \in M_{2n}$, ${\textbf u}_n(b) \in V_{2n}$, and $g \in {\rm SO}_{2n}(F)$, where $\delta_{Q_{2n}}$ is a modulus character.

\par
We denote by $\tau^{\ast}$ the representation of ${\rm GL}_n(F)$ defined by $\tau^{\ast}(a)=\tau(a^{\ast})$ for $a \in {\rm GL}_{n}(F)$.
If $\tau$ is irreducible, $\tau^{\ast}$ is isomorphic to the contragredient representation $\tau^{\vee}$ of $\tau$.
We fix a nonzero Whittaker functional $\lambda \in {\rm Hom}_{U_{{\rm GL}_{n}(F)}}(\tau,\psi^{-1}_{U_{{\rm GL}_{n}(F)}})$. 
The Whittaker functions associated to $v \in V_{\tau}$ are given by
\[
 W_v(a):=\lambda(\tau(a)v) \quad \text{and} \quad W^{\ast}_v(a):=\lambda(\tau(d_na^{\ast})v), 
\]
where $d_n={\rm diag}(-1,1,\dotsm,(-1)^n) \in {\rm GL}_n(F)$, and their Whittaker models are $\mathcal{W}(\tau,\psi^{-1}_{U_{{\rm GL}_{n}(F)}})$ and $\mathcal{W}(\tau^{\ast},\psi^{-1}_{U_{{\rm GL}_{n}(F)}})$, respectively.
Given $\xi_s \in I(s,\tau)$, we consider  the $\mathbb{C}$-valued function $f_{\xi_s}$ on ${\rm SO}_{2n}(F) \times {\rm GL}_n(F)$ defined by
\[
 f_{\xi_s}(g,a)=\lambda(\tau(a)\xi_s(g)) \quad \text{for $g \in {\rm SO}_{2n}(F)$ and $a \in {\rm GL}_n(F)$}.
\]
Let $V^{{\rm SO}_{2n}}_{Q_{2n}}(s,\mathcal{W}(\tau,\psi^{-1}_{U_{{\rm GL}_{n}(F)}}))$ be the space of functions $\{ f_{\xi_s} \,|\, \xi_s \in I(s,\tau) \}$. 
For $n \leq r$, we denote
\[
  w_0=\begin{pmatrix} & I_n \\ -I_n & \end{pmatrix} \in {\rm SO}_{2n}(F).
  \]
We define a (standard) intertwining operator 
\[
M(s,\tau,\psi^{-1}) : V^{{\rm SO}_{2n}}_{Q_{2n}}(s,\mathcal{W}(\tau,\psi^{-1}_{U_{{\rm GL}_{n}(F)}})) \rightarrow V^{{\rm SO}_{2n}}_{Q_{2n}}(1-s,\mathcal{W}(\tau^{\ast},\psi^{-1}_{U_{{\rm GL}_{n}(F)}}))
\]
 by
 \[
  M(s,\tau,\psi^{-1})f_s(g,a)=\int_{V_{2n}} f_s(w_0^{-1}vg,d_na^{\ast}) \,dv, \quad g \in {\rm SO}_{2n}(F), a \in {\rm GL}_n(F).
 \]
 This integral converges absolutely for ${\rm Re}(s)$ sufficiently large 
and is defined by a meromorphic continuation otherwise.

\subsection{The local zeta integrals and gamma factors}
We set $G={\rm SO}_{2r+1}(F)$
Let $\psi_{U}$ be the generic character of $U$ defined by
\[
  \psi_{U}(u)=\psi \left( \sum_{i=1}^r u_{i,i+1} \right), \quad u=(u_{i,j})_{1 \leq i,j \leq 2r+1} \in U.
\]
Let $(\pi,V_{\pi})$ be an irreducible $\psi_{U}$-generic representation of $G$, whose Whittaker model is $\mathcal{W}(\pi,\psi_{U})$.
Soudry \cite[\S 1]{Sou93} established the integral representation associated to $W \in \mathcal{W}(\pi,\psi_{U})$ and $f_s \in V^{{\rm SO}_{2n}}_{Q_{2n}}(s,\mathcal{W}(\tau,\psi^{-1}_{U_{{\rm GL}_{n}(F)}}))$
over non-archimedean local fields including positive characteristic of our cheif interest in the following way;
\[
  \Psi(W,f_s)=\int_{U_{2n} \backslash {\rm SO}_{2n}(F)} \int_{{\rm Mat}_{r-n \times n}(F)} W \left( w_{n,r-n} \begin{pmatrix} I_{r-n} & x \\  &I_n \end{pmatrix}^{\wedge} j(g)w^{-1}_{n,r-n} \right) f_s(g,I_n) \,dx dg.
\]
These integrals are absolutely convergent for $\mathrm{Re}(s)$ sufficiently large and enjoy meromorphic continuation. 

\begin{proposition}[Local functional equation]
There exists a meromorphic function $\Gamma(s,\pi \times \tau,\psi)$ in $\mathbb{C}(q^{-s})$ such that
\begin{equation}
\label{SO2n+1func}
\Psi(W,M(s,\tau,\psi^{-1})f_s)=\Gamma(s,\pi \times \tau,\psi) \Psi(W,f_s),
\end{equation}
for all $W \in \mathcal{W}(\pi,\psi_U)$ and $f_s \in V^{{\rm SO}_{2n}}_{Q_{2n}}(s,\mathcal{W}(\tau,\psi^{-1}_{U_{{\rm GL}_{n}(F)}}))$.
\end{proposition}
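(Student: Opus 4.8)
The plan is to follow the familiar template for Rankin--Selberg functional equations: first show that both sides continue to rational functions of $q^{-s}$, then pin down the proportionality factor by a one-dimensionality argument resting on Proposition \ref{Bessel}. Since Soudry's integral is available over non-archimedean fields of arbitrary residue characteristic, the argument runs uniformly in positive characteristic.

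First I would establish rationality. We already know that $\Psi(W,f_s)$ converges absolutely for $\mathrm{Re}(s) \gg 0$; by the standard meromorphic-continuation argument for such $p$-adic Rankin--Selberg integrals --- Bernstein's rationality principle, as already used in Soudry's construction --- it extends to an element of $\mathbb{C}(q^{-s})$ for every fixed $W$ and $f_s$. The standard intertwining operator $M(s,\tau,\psi^{-1})$ is itself rational in $q^{-s}$ and carries $V^{{\rm SO}_{2n}}_{Q_{2n}}(s,\mathcal{W}(\tau,\psi^{-1}_{U_{{\rm GL}_{n}(F)}}))$ into $V^{{\rm SO}_{2n}}_{Q_{2n}}(1-s,\mathcal{W}(\tau^{\ast},\psi^{-1}_{U_{{\rm GL}_{n}(F)}}))$, so the left-hand side $\Psi(W,M(s,\tau,\psi^{-1})f_s)$ is the Soudry integral attached to the dual data $\tau^{\ast}$ at the parameter $1-s$ and is likewise rational in $q^{-s}$.

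Next I would fix a value $s_0$ lying outside the finitely many ($q^{-s}$-)poles of these two families and outside the poles of $M(s,\tau,\psi^{-1})$, so that $M(s_0,\tau,\psi^{-1})$ is an isomorphism onto the corresponding space at the parameter $1-s_0$. Unwinding the integration domain $U_{2n}\backslash {\rm SO}_{2n}(F) \times {\rm Mat}_{r-n \times n}(F)$ in the definition of $\Psi$ and carrying out the standard root-exchange (``peeling'') manipulations on the unipotent subgroups, one checks that the bilinear form $(W,\xi_{s_0}) \mapsto \Psi(W,f_{\xi_{s_0}})$ lies in a space of bilinear forms on $\mathcal{W}(\pi,\psi_U) \times I(s_0,\tau)$ governed by a Bessel functional of $\pi$ against the data $(H,\nu)$, where $\sigma$ is the ${\rm SO}_{2n}(F)$-representation underlying $I(s_0,\tau)$; since this representation is of Whittaker type, Proposition \ref{Bessel} --- in the form recorded in its proof, valid for finitely generated representations of Whittaker type --- shows that this space is at most one-dimensional. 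Both $(W,\xi_{s_0}) \mapsto \Psi(W,f_{\xi_{s_0}})$ and $(W,\xi_{s_0}) \mapsto \Psi(W,M(s_0,\tau,\psi^{-1})f_{\xi_{s_0}})$ belong to it --- for the latter, one precomposes the corresponding Bessel-type form at $1-s_0$ for $\tau^{\ast}$ with the intertwining isomorphism $M(s_0,\tau,\psi^{-1})$, which respects the ${\rm SO}_{2n}(F)$-action --- and $\Psi(\cdot,\cdot)$ is not identically zero because Whittaker functionals are non-zero. Hence there is a scalar $\Gamma(s_0,\pi\times\tau,\psi)$, independent of $W$ and $f_{s_0}$, with $\Psi(W,M(s_0,\tau,\psi^{-1})f_{s_0}) = \Gamma(s_0,\pi\times\tau,\psi)\,\Psi(W,f_{s_0})$.

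Finally, choosing $W_0$ and $f_s^0$ for which $s \mapsto \Psi(W_0,f_s^0)$ is not the zero rational function, one gets $\Gamma(s,\pi\times\tau,\psi) = \Psi(W_0,M(s,\tau,\psi^{-1})f_s^0) / \Psi(W_0,f_s^0)$ on a Zariski-dense set of $s$, whence $\Gamma(s,\pi\times\tau,\psi) \in \mathbb{C}(q^{-s})$ by the rationality step, and the functional equation \eqref{SO2n+1func} then holds identically. The hard part will be the uniqueness step: making precise that the zeta integral factors through a Bessel functional so that Proposition \ref{Bessel} applies, which requires the root-exchange bookkeeping on the unipotent subgroups together with the observation that $I(s,\tau)|_{{\rm SO}_{2n}(F)}$ is merely of Whittaker type rather than irreducible --- forcing the use of the Whittaker-type version of Bessel uniqueness. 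The remaining ingredients (convergence, meromorphic continuation, rationality of $M(s,\tau,\psi^{-1})$) are routine $p$-adic harmonic analysis, carrying over verbatim to positive characteristic.
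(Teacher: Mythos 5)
Your proposal is correct and follows essentially the same route as the paper: interpret both $\Psi(W,f_s)$ and $\Psi(W,M(s,\tau,\psi^{-1})f_s)$ as elements of the Hom-space ${\rm Hom}_{H}(\pi^{w_{n,r-n}}\otimes V^{{\rm SO}_{2n}}_{Q_{2n}}(s,\mathcal{W}(\tau,\psi^{-1}_{U_{{\rm GL}_{n}(F)}})),\nu)$ after meromorphic continuation, invoke the one-dimensionality of that space for generic $s$ via Proposition \ref{Bessel}, and then propagate the proportionality to all $s$ by rationality in $q^{-s}$. The only (harmless) variation is that you appeal to the Whittaker-type extension of Bessel uniqueness mentioned parenthetically in the proof of Proposition \ref{Bessel}, whereas the paper notes instead that $I(s,\tau)$ is irreducible outside finitely many $q^{-s}$ and applies the irreducible case directly; both suffice, and the paper's route is a bit more economical since it avoids re-deriving the Whittaker-type statement.
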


\begin{proof}
We denote by $\pi^{w_{n,r-n}}$ the representation $g \mapsto \pi(w_{n,r-n}gw^{-1}_{n,r-n})$ of $G$. For all but a discrete subset of $s$, the meromorphic continuation of the integrals
$\Psi(W,f_s)$ and $\Psi(W,M(s,\tau,\psi^{-1})f_s)$ can be regarded as bilinear forms $(W,f_s) \mapsto \Psi(W,f_s)$ and $(W,f_s) \mapsto \Psi(W,M(s,\tau,\psi^{-1})f_s)$ in the space
\begin{equation}
\label{BesselFunction}
 {\rm Hom}_{H}(\pi^{w_{n,r-n}}\otimes V^{{\rm SO}_{2n}}_{Q_{2n}}(s,\mathcal{W}(\tau,\psi^{-1}_{U_{{\rm GL}_{n}(F)}})), \nu).
\end{equation}
Since the induced representation $V^{{\rm SO}_{2n}}_{Q_{2n}}(s,\mathcal{W}(\tau,\psi^{-1}_{U_{{\rm GL}_{n}(F)}}))$ is irreducible outside a finite number of $q^s$, the uniqueness of Bessel function,
 Proposition \ref{Bessel}, implies that the dimension of the space \eqref{BesselFunction} is at most one outside a finite number of $q^s$.
 Then the local $\gamma$-factor $\gamma(s,\pi \times \tau,\psi)$ is defined as a proportionality. 
\end{proof}

The normalized intertwining operator is
\[
  N(s,\tau,\psi^{-1})=C^{{\rm SO}_{2n}}_{\psi}(s-1/2,\tau,w_0)M(s,\tau,\psi^{-1}),
\]
where $C^{{\rm SO}_{2n}}_{\psi}(s-1/2,\tau,w_0)$ is the {\it Langlands--Shahidi $\gamma$-factors} defined by Shahidi's functional equation \citelist{\cite{Kap15}*{(3.2)}\cite{Lom15}*{\S 2.2}  \cite{Lom16}*{\S 6.4 Theorem}};
\begin{multline}
\label{exterior-localcoeff-def}
 \int_{V_{2n}} f_s({\textbf m}_n(d_n)w_0^{-1}v,I_n) \psi(v_{n-1,n+1}) \,dv\\
 = C^{{\rm SO}_{2n}}_{\psi}(s-1/2,\tau,w_0) \int_{V_{2n}} M(s,\tau,\psi^{-1})f_s ({\textbf m}_n(d_n)w_0^{-1}v,I_n) \psi(v_{n-1,n+1}) \,dv.
\end{multline}
The local Rankin--Selberg gamma factor $\gamma(s,\pi \times \tau,\psi)$ is defined by the identity
\[
 \Psi(W,N(s,\tau,\psi^{-1})f_s)=\omega_{\tau}(-1)^r\gamma(s,\pi \times \tau,\psi) \Psi(W,f_s).
\]
As a result, we have the formula
\begin{equation}
\label{SO2n+1Unnormal}
\Gamma(s,\pi \times \tau,\psi)=\omega_{\tau}(-1)^r \frac{\gamma(s,\pi \times \tau,\psi)}{C^{{\rm SO}_{2n}}_{\psi}(s-1/2,\tau,w_0)},
\end{equation}
which implies that $\gamma(s,\pi \times \tau,\psi)$ is a rational function in $q^{-s}$. The motivation underlying the definition of $\gamma(s,\pi \times \tau,\psi)$ is 
to obtain the ``concise" multiplicative properties illustrated in Theorem \ref{SO2n+1property}-\ref{SO2n+1propert-4}.

\subsection{The equality of gamma factors} 
\label{SO2n+1-equal-section}
From this point on, $k$ will denote a (global) function a field with field of constant $\mathbb{F}_q$ and ring of ad\`eles $\mathbb{A}_k$.
If $r=0$, we set $G=\{ 1\}$. What we would like to do next is to explain that $\gamma(s,\pi \times \tau,\psi)$
satisfies a list of fundamental properties, which reconcile $\gamma$-factors from two exotic methods. The formulation below is adapted from Lomel\'{\i} \cite[\S 1.4]{Lom15}
who states it for the $\gamma$-factor arising from Langlands--Shahidi method.

\begin{theorem} Let $\pi$ and $\tau$ be a pair of irreducible admissible generic representations of ${\rm SO}_{2r+1}(F)$ and ${\rm GL}_n(F)$.
The $\gamma$-factor $\gamma(s,\pi \times \tau,\psi)$ satisfies the following properties.
\label{SO2n+1property}
 \begin{enumerate}[label=$(\mathrm{\roman*})$]
\item {\rm (Naturality)}\label{SO2n+1propert-1} Let $\eta : F' \rightarrow F$ be an isomorphism of local fields. Via $\eta$, $\pi$ and $\tau$ define 
irreducible admissible generic representations $\pi'$ of ${\rm SO}_{2r+1}(F')$ and $\tau'$ of ${\rm GL}_n(F')$, and a nontrivial additive character $\psi'$ of $F'$.
Then we have
\[
 \gamma(s,\pi \times \tau,\psi)=\gamma(s,\pi' \times \tau',\psi').
\]
\item {\rm (Isomorphism)}\label{SO2n+1propert-2} If $\pi'$ and $\tau'$ are irreducible admissible generic representations of ${\rm SO}_{2r+1}(F)$ and of ${\rm GL}_n(F)$ such that $\pi \cong \pi'$
and $\tau \cong \tau'$, then 
\[
 \gamma(s,\pi \times \tau,\psi)=\gamma(s,\pi' \times \tau',\psi).
\]
\item {\rm (Minimal cases)}\label{SO2n+1propert-3} For $r=0$, $\gamma(s,\pi \times \tau,\psi)=1$.
\item\label{SO2n+1propert-4} {\rm (Multiplicativity)} Let $\pi$ $($respectively, $\tau$$)$ be the irreducible generic quotient of a representation parabolically induced from
\[
 {\rm Ind}^{{\rm SO}_{2r+1}(F)}_{\mathcal{P}_{2r+1}}(\pi_1 \otimes \pi_2 \otimes \dotsm \otimes \pi_t \otimes \pi_0)
\]
$($respectively $ {\rm Ind}^{{\rm GL}_{n}(F)}_{\mathcal{P}_n}(\tau_1 \otimes \tau_2 \otimes \dotsm \otimes \tau_s)$$)$, where  $\mathcal{P}_{2r+1}$ $($respectively $\mathcal{P}_n$$)$  is the standard  
parabolic subgroup with its Levi part isomorphic to $\prod_{i=1}^t {\rm GL}_{r_i}(F) \times {\rm SO}_{2r_0+1}(F)$ 
$($respectively $\prod_{j=1}^s {\rm GL}_{n_j}(F)$$)$ associated to the partition $(r_1,\dotsm,r_t,r_0)$ of r $($respectively $(n_1,n_2,\dotsm,n_s)$ of $n$$)$. Then
\[
\gamma(s,\pi \times \tau,\psi)=\prod_{j=1}^s \gamma(s,\pi_0 \times \tau_j,\psi) \prod_{i=1}^t \prod_{j=1}^s \gamma(s,\pi_i \times \tau_j,\psi)  \gamma(s,\pi^{\vee}_i \times \tau_j,\psi).
\]
Here $\gamma(s,\pi_i \times \tau_j,\psi)$ is the ${\rm GL}_{r_i} \times {\rm GL}_{n_j}$ $\gamma$-factor defined by Jacquet, Piatetski-Shapiro and Shalika (cf. \cite{Cog14}).
\item {\rm (Dependence of $\psi$)}  \label{SO2n+1propert-5} For any $a \in F^{\times}$, let $\psi^a$ be the character given by $\psi^a(x)=\psi(ax)$. Then
\[
 \gamma(s,\pi \times \tau,\psi^a)=\omega_{\tau}(a)^{2r}|a|^{2nr(s-\frac{1}{2})}\gamma(s,\pi \times \tau,\psi).
\]
\item {\rm (Stability)} \label{SO2n+1propert-6} Let $\pi_1$ and $\pi_2$ be irreducible admissible generic representations of ${\rm SO}_{2r+1}(F)$.
For $\chi$ sufficiently highly ramified, we have
\[
 \gamma(s,\pi_1 \times \chi,\psi)= \gamma(s,\pi_2 \times \chi,\psi).
\]
\item {\rm (Global functional equation)} \label{SO2n+1propert-7}  
We assume that $\pi$ and $\tau$ are globally generic irreducible cuspidal automorphic representations of ${\rm SO}_{2r+1}(\mathbb{A}_k)$ and ${\rm GL}_n(\mathbb{A}_k)$.
Let $S$ be a finite set of places such that for all $v \notin S$, all data are unramified. Then
\[
  L^S(s,\pi \times \tau)=\prod_{v \in S} \gamma(s,\pi_v \times \tau_v,\psi_v) L^S(1-s,\pi^{\vee} \times \tau^{\vee}),
\]
where the partial $L$-function is given by
\begin{equation}
\label{partial-L-function}
  L^S(s,\pi \times \tau)=\prod_{v \notin S} L(s,\pi_v \times \tau_v).
\end{equation}
\end{enumerate}
\end{theorem}


\begin{proof} The properties \ref{SO2n+1propert-1}, \ref{SO2n+1propert-2} and \ref{SO2n+1propert-3} are straightforward.
As in \cite{Kap13,Kap15}, the hard part should be multiplicativity \ref{SO2n+1propert-4}, which follows from Soudry \cite{Sou93,Sou20}. The dependency on $\psi$ \ref{SO2n+1propert-5} is explained in \cite[\S 5.1]{Kap13}.
Stability \ref{SO2n+1propert-6} is confirmed by Cogdell and Piatetski-Shapiro \cite{CPS98}.
With regard to functional equation \ref{SO2n+1propert-7}, the involution of M{\oe}glin, Vign\'eras, and Waldspurger used in \cite[\S 5]{Kap13} is not recorded for this generality in the literature. 
For the sake of brevity, we include an alternative approach. Let $\pi_v={\rm Ind}^G_{B}(\mu)$ and $\tau_v={\rm Ind}^{{\rm GL}_n(F)}_{B_{n}}(\chi)$ be irreducible generic unramified representations of $G$ and ${\rm GL}_n(F)$,
where $\mu$ and $\chi$ are unramified characters of $B$ and $B_{n}$. 
If
\[
 t={\rm diag}(a_1,\dotsm,a_r,1,a^{-1}_r,\dotsm,a^{-1}_1) \in A
 \] 
 as in \eqref{maximalelement}, then $\mu(t)=\mu_1(t_1)\mu_2(t_2) \dotsm\mu_r(t_r)$. Let
 \[
   A_{\pi_v}={\rm diag}(\mu_1(\varpi),\dotsm,\mu_r(\varpi),\mu^{-1}_r(\varpi),\dotsm,\mu^{-1}_1(\varpi)) \in {\rm Sp}_{2r}(\mathbb{C})
 \]
 be the semisimple conjugacy class of ${\rm Sp}_{2r}(\mathbb{C})$ associated with $\pi_v$. 
 Since the Satake parameters $A_{\pi_v}$ and $A_{\pi_v^{\vee}}$ of $\pi_v$ and $\pi_v^{\vee}$, respectively, are conjugate in the dual group ${\rm Sp}_{2r}(\mathbb{C})$,
 it follows that $L(s,\pi_v \times \tau_v)=L(s,\pi_v^{\vee} \times \tau_v)$. To be specific, 
 we attach to $\tau$ a semisimple conjugacy class in ${\rm GL}_n(\mathbb{C})$
 denoted by
 \[
  B_{\tau_v}={\rm diag}(\chi_1(\varpi),\dotsm,\chi_n(\varpi)).
 \]
 Then $L$-functions for unramified principal series representations are defined by
\begin{multline*}
   L(s,\pi_v \times \tau_v)=\\
   \frac{1}{\det(I_{2rn}-(A_{\pi_v} \otimes B_{\tau_v})q^{-s})}
   =\prod_{1 \leq i \leq r,1\leq j \leq n} \left( \frac{1}{1-\mu_i(\varpi)\chi_j(\varpi)q^{-s}} \cdot
   \frac{1}{1-\mu^{-1}_i(\varpi)\chi_j(\varpi)q^{-s}} \right).
\end{multline*}
 However, the contragredient representation $\pi_v^{\vee}$ is ${\rm Ind}^{G}_{B}(\mu^{-1})$ (cf. \cite[Chapter 4-\S 4 Fact (1)]{Kim04}). The Satake parametrization
 gives semisimple conjugacy classes $\{ A_{\pi_v^{\vee}}=A_{\pi_v}^{-1} \}$ in ${\rm Sp}_{2r}(\mathbb{C})$. All that remains is to state that
 \[
   L(s,\pi_v^{\vee} \times \tau_v)=\frac{1}{\det(I_{2rn}-(A^{-1}_{\pi_v} \otimes B_{\tau_v})q^{-s})} =\frac{1}{\det(I_{2rn}-(A_{\pi_v} \otimes B_{\tau_v})q^{-s})}=L(s,\pi_v \times \tau_v). 
 \] 
 Based on unramified calculation of Rankin--Selberg integrals by Soundry \cite[\S 12]{Sou93}, Kaplan \cite[Theorem 1-(9)]{Kap15} obtains the following crude functional equation from the properties of Eisenstein series;
 \[
  L^S(s,\pi \times \tau)=\prod_{v \in S} \gamma(s,\pi_v \times \tau_v,\psi_v) L^S(1-s,\pi \times \tau^{\vee}).
\]
We apply the equality $L(s,\pi_v \times \tau_v)=L(s,\pi_v^{\vee} \times \tau_v)$ to $\tau_v^{\vee}$ for each $v \notin S$ to get 
\[
L^S(1-s,\pi \times \tau^{\vee})= L^S(1-s,\pi^{\vee} \times \tau^{\vee}). \qedhere
\]
\end{proof}

In contrast to \cite[\S 1.4-(v)]{Lom15}, the central character $\omega_{\pi}$ of $\pi$ in Theorem \ref{SO2n+1property}-\ref{SO2n+1propert-5} is absent, because the center $Z$ is $\{  I_{2r+1}\}$.
Regarding Theorem \ref{SO2n+1property}-\ref{SO2n+1propert-7}, the keen reader may notice that $\pi^{\vee}$ is used in \cite[\S 1.4-(vii)]{Lom15} as opposed to $\pi$ in \cite[Theorem 1-(9)]{Kap13}.
As mentioned in \cite[Theorem 1.5]{Lom15}, any system of properties \ref{SO2n+1propert-1}-\ref{SO2n+1propert-7} uniquely determines $\gamma$-factors.

\begin{theorem}
\label{SO2n-equality} 
The Rankin--Selberg $\gamma$-factor $\gamma(s,\pi \times \tau,\psi)$ coincides with Shahidi's $\gamma$-factor for $\pi \times \tau$ defined in \cite{Lom15}.
\end{theorem}

Theorem \ref{SO2n-equality} follows from the standard globalization \cite[Proposition 3.1]{Lom15} by Henniart and Lomel\'{\i}, which is a subtle refinement of a result of Henniart and Vign\'eras. The main point is that an irreducible supercuspidal generic representation can be embedded as a local factor of an irreducible cuspidal automorphic globally generic representation.

\subsection{The local converse theorem}
\label{SO2n+1-localconverse}

We begin with the case of irreducible generic supercuspidal representations. 
Analogues to main results in \cite{Zha18,Zha19} utilize twisting by all generic representations of ${\rm GL}_k(F)$ with $1 \leq k \leq r$.
We reduce number of twists by only using generic supercuspidal representations of ${\rm GL}_k(F)$.

\begin{theorem}
\label{SO-converse-supercusp}
Let $\pi$ and $\pi'$ be irreducible generic supercuspidal representations of $G$. If
 \[
 \gamma(s,\pi \times \tau,\psi)=\gamma(s,\pi' \times \tau,\psi)
 \]
 for all irreducible generic supercuspidal representations $\tau$ of ${\rm GL}_k(F)$ and for all $k$ with $1 \leq k \leq r$, then $\pi \cong \pi'$. 
\end{theorem}

\begin{proof}
We can ease the given assumption to
all irreducible generic smooth representations $\tau$ of ${\rm GL}_k(F)$ and for all $k$ with $1 \leq k \leq r$ by the multiplicativity of local
gamma factors, Proposition \ref{SO2n+1property}-\ref{SO2n+1propert-4}. As seen in \eqref{SO2n+1Unnormal}, a normalized gamma factor $\gamma(s,\pi \times \tau,\psi)$
and an unnormalized one $\Gamma(s,\pi \times \tau,\psi)$ differ by factors which only depend on $\tau$.
Therefore, $\gamma(s,\pi \times \tau,\psi)$ and $\gamma(s,\pi' \times \tau,\psi)$ in the assumption can be safely replaced with 
$\Gamma(s,\pi \times \tau,\psi)$ and $\Gamma(s,\pi' \times \tau,\psi)$. Combining all together, we conclude that
$\Gamma(s,\pi \times \tau,\psi)$ and $\Gamma(s,\pi' \times \tau,\psi)$ are equal for all irreducible generic smooth representations $\tau$ of ${\rm GL}_k(F)$ 
 and for all $k$ with $1 \leq k \leq r$. In the same argument as in the proofs of \cite{Zha18} and \cite{Zha19} using the theory of partial Bessel functions and Howe vectors,
 we can prove that $\pi \cong \pi'$. We omit the rest of the proof.
\end{proof}

To proceed, we need what is widely accepted as {\it Casselman--Shahidi Lemma} \cite[Theorem 5.1]{CS98}.
It is quite a deep theorem and is recently resolved by Luo \cite{Luo24} for the characteristic zero case.
In the case of local function fields, we do not strive for maximal generality, similar to Luo \cite{Luo24}, though the statement is believed to be true regardless of representations being supercuspidal.
Occasionally, the hypothesis might not be necessary, but which hold in all our applications we want at least.
Aided by \citelist{\cite{Lom15}*{\S 2.2}  \cite{Lom16}*{\S 6.4 Theorem}}, the Langlands--Shahidi local coefficient can be computed in different ways according to the following cases;
\[
C^{{\rm SO}_{2n}}_{\psi}(s-1/2,\tau,w_0)=
 \begin{cases}
 \gamma(s-1/2,\tau,\psi), \quad  &\text{if $\tau$ is a character of $F^{\times}$;} \\
 \gamma(2s-1,\tau,\wedge^2,\psi), \quad & \text{otherwise,} \\
 \end{cases}
\]
where $\gamma(s,\tau,\psi)$ denote a Tate $\gamma$-factor and  $\gamma(s,\tau,\wedge^2)$ is a Langlands--Shahidi exterior square $\gamma$-factor in
\citelist{ \cite{Lom15}*{\S 2.2} \cite{Lom16}*{\S 6.4 Theorem}}. For this reason, we deal with $n=1$ case separately. The Langlands--Shahidi exterior square $\gamma$-factor is in accordance with the local Langlands correspondence in the case of positive characteristic \cite[Theorem 2.2]{Lom15} and of characteristic zero \cite{CST17}.
In light of \cite{Jo23}, the local exterior square $L$-function $L(s,\tau,\wedge^2)$ appearing in this paper can be equivalently defined by the Langlands--Shahidi method
or the local zeta integrals of Jacquet--Shalika.

\begin{lemma}[Casselman--Shahidi Lemma]
\label{CS-SO2n}
Let $\tau$ be either an irreducible unitary supercuspidal representation of ${\rm GL}_n(F)$ with $n > 1$ or a unitary ramified character of $F^{\times}$. The intertwining operator
\[
  \frac{1}{L(2s-1,\tau,\wedge^2)} M(s,\tau,\psi^{-1})
\]
is entire as a function of $s$.
\end{lemma}

\begin{proof}
The statement of the lemma is clearly invariant under twisting by an unramified character.
Upon twisting $\tau$ by $|\det \cdot |^{it}$, it is enough to consider the pole on the real line.
We handle the case when $\tau$ is an irreducible unitary supercuspidal representation of ${\rm GL}_n(F)$ with $n > 1$.
With \eqref{tau-s} in mind, we caution that $s$ in Lomel\'{\i} \cite{Lom17} and Shahidi \cite{Sha90}
is really $s+1/2$ in our notation and Kaplan's \cite{Kap15}. 
To be consistent, we set $s'=s-1/2$.
The proof of \cite[Theorem 5.2]{CS98} applies verbatim to the case of ${\rm Re}(s') > 0$ or ${\rm Re}(s') < 0$.
As a result, we draw the conclusion (cf. the statement in the course of proof of \cite[Lemma 4.3]{LM17}) that
the possible real pole of $M(s',\tau,\psi^{-1})$ is at $s'=0$ and it can only occur if $\tau$ is self-dual, i.e., $\tau \cong \tau^{\vee}$.
Suppose therefore that $\tau$ is self-dual.
By using \eqref{exterior-localcoeff-def} twice, the definition of Plancherel measure leads us to
\begin{equation}
\label{Plancherel-composition}
\gamma(-2s',\tau^{\vee},\wedge^2,\psi^{-1}) M(-s',\tau^{\vee},\psi^{-1}) \gamma(2s',\tau,\wedge^2,\psi)M(s',\tau,\psi^{-1})={\rm Id}. 
\end{equation}
Owing to \cite[Proposition 5.4]{Lom17}, the induced representation $I(s',\tau)$ is irreducible if and only if $L(s',\tau,\wedge^2)$ 
has a pole at $s'=0$. If this is the case, the image of  $I(s',\tau)$ under the left hand side of maps in \eqref{Plancherel-composition} is either trivial or itself.
On the one hand, $L(1-2s',\tau,\wedge^2)$ has neither zeros nor poles for ${\rm Re}(s') < 1/2$, while $L(1+2s',\tau^{\vee},\wedge^2)$ has
no zeros and no poles for ${\rm Re}(s') > -1/2$. On the other hand, ${\rm Id}$ is holomorphic and always non-zero for $s \in \mathbb{C}$.
Combining those facts together, we obtain the holomorphy of normalized intertwining operators that we seek for.
\par
When $\tau$ is a unitary ramified character of $F^{\times}$, the proof is similar to that of Lemma \ref{CS-SP2n}. Since the metapletic case is more involved, we omit the proof entirely.
\end{proof}

Several versions of Jacquet's subquotient theorem \cite{Jac75} are established in various places such as \cite[Proposition 3.19, p.34]{BZ76} for the ${\rm GL}_n$ case and \cite[\S 2.1 Claim, p.55]{Ber92} for the connective reductive case.   Our case is the positive characteristic analogue of \cite[p.777]{JS03} and we use this occasion to summarize it in the following way.

\begin{lemma}
\label{SOcuspsupport}
Let $\pi$ be an irreducible admissible generic representation of ${\rm SO}_{2r+1}(F)$. Then $\pi$ is a subquotient of 
the unitary induced representation 
\[
  {\rm Ind}^{{\rm SO}_{2r+1}(F)}_{\mathcal{P}_{2r+1}}(\rho_1|\det|^{z_1} \otimes \dotsm \otimes \rho_t|\det|^{z_t}\otimes \pi_0),
\]
where $\mathcal{P}_{2r+1}$ is the standard parabolic subgroup with its Levi part isomorphic to $\prod_{i=1}^t {\rm GL}_{r_i}(F) \times {\rm SO}_{2r_0+1}(F)$
associated to the partition $(r_1,\dotsm,r_t,r_0)$ of $r$, each $\rho_i$ is an irreducible unitary supercuspidal representation of ${\rm GL}_{r_i}(F)$
with the $z_i$ real and ordered so that $z_1 \geq z_2 \geq \dotsm \geq z_t \geq 0$, and $\pi_0$ is an irreducible generic supercuspidal representation of $ {\rm SO}_{2r_0+1}(F)$.
\end{lemma}

We oftentimes call a triplet $(\mathcal{P}_{2r+1};\rho_1,\rho_2,\dotsm,\rho_t;\pi_0)$ {\it supercuspidal support} and $(z_1,z_2,\dotsm,z_t)$ {\it exponents} associated to the irreducible admissible generic representation $\pi$. We record the following simple but useful results from \cite[Proposition 2.9]{DNS15} and Lemma \ref{CS-SO2n}.

\begin{proposition} 
\label{PoleDecomposition}
Let $\tau$ be an irreducible unitary supercuspidal representation of ${\rm GL}_n(F)$. With the notation in Lemma \ref{SOcuspsupport}, we have the followings;
\begin{enumerate}[label=$(\mathrm{\roman*})$]
\item\label{PoleDecomposition-1} If the product $\prod_{i=1}^t \gamma(s+z_i,\rho_i \times \tau,\psi)$ has a real pole $($respectively, a real zero$)$ at $s=s_0$, then $\tau \cong \rho_i^{\vee}$
and $s_0=1-z_i$ $($respectively, $s_0=-z_i)$ for some $1 \leq i \leq t$.
\item\label{PoleDecomposition-2} If the product $\prod_{i=1}^t \gamma(s-z_i,\rho_i^{\vee} \times \tau,\psi)$ has a real pole $($respectively, a real zero$)$ at $s=s_0$, then $\tau \cong \rho_i$
and $s_0=1+z_i$ $($respectively, $s_0=z_i)$ for some $1 \leq i \leq t$.
\item\label{PoleDecomposition-3} The factor $\gamma(s,\pi_0 \times \tau,\psi)$ has no zero for ${\rm Re}(s) > 0$. 
 If $\gamma(s,\pi_0 \times \tau,\psi)$ has a real pole at $s=s_0$, then the pole must be a simple pole at $s_0=1$ and $\tau \cong \tau^{\vee}$. 
\end{enumerate}
\end{proposition}

\begin{proof}
The properties \ref{PoleDecomposition-1} and \ref{PoleDecomposition-2} are explained in \cite[Proposition 2.9]{DNS15}.  On account of Theorem \ref{SO2n-equality}, the first assertion of \ref{PoleDecomposition-3} follows from the holomorphy of tempered $L$-functions via Langlands-Shahidi methods \cite[\S 4.1-(ix)]{Lom15}.
With the definition of $\gamma(s,\pi_0 \times \tau,\psi)$ in hand, we have
\begin{multline*}
 \gamma(s,\pi_0 \times \tau,\psi) \Psi(W,f_s)
 =\omega_{\tau}(-1)^r\gamma(2s-1,\tau,\wedge^2,\psi)\Psi(W,M(s,\tau,\psi^{-1})f_s)\\
 =\omega_{\tau}(-1)^r\varepsilon(2s-1,\tau,\wedge^2,\psi) L(2(1-s),\tau,\wedge^2) \frac{\Psi(W,M(s,\tau,\psi^{-1})f_s)}{L(2s-1,\tau,\wedge^2)}.
\end{multline*}
Now \cite[Section 6]{Sou93} ensures that there exist a Whittaker function $W \in \mathcal{W}(\pi_0,\psi_{U})$ and a function $f_s\in  V^{{\rm SO}_{2n}}_{Q_{2n}}(s,\mathcal{W}(\tau,\psi^{-1}))$
such that $\Psi(W,f_s)$ is a non-zero constant. In virtue of Lemma \ref{CS-SO2n}, the existence of poles of $\gamma(s,\pi_0 \times \tau,\psi)$ entails to the pole of $L(2(1-s),\tau,\wedge^2)$ at $s=1$.
Rephrasing it in a different manner, $L(s,\tau,\wedge^2)$ has a pole at $s=0$. Taking into account the equality $L(s,\tau,\wedge^2)L(s,\tau,{\rm Sym}^2)=L(s,\tau \times \tau)$ \cite[Lemma 7.12]{Lom09}, $\tau$ becomes self-dual, which confirms the second part of \ref{PoleDecomposition-3}.
\end{proof}

\begin{remark}
In the course of the proof of Proposition \ref{PoleDecomposition}, we check that if  $\gamma(s,\pi_0 \times \tau)$ has a pole at $s=1$, then $L(s,\tau,\wedge^2)$ has a pole at $s=0$.
\end{remark}

By determining the location of poles and zeros, we delineate the internal structure of irreducible admissible generic representations in terms of supercupsidal supports.
In the identical argument as in \cite[Theorem 5.1]{JS03} and \cite[Theorem 3.5]{Liu11}, we can prove the following proposition. Henceforth we skip the proof entirely.

\begin{proposition}
\label{SO-factorization}
 With the notation in Lemma \ref{SOcuspsupport}, suppose that $\rho'_i$ are irreducible unitary supercuspidal representations of ${\rm GL}_{r_i}(F)$ for $1 \leq i \leq t'$
 and $\pi'_0$ is an irreducible generic supercuspidal representation of ${\rm SO}_{2r_0+1}(F)$ with $r=r'_0+\sum_{i=1}^{t'}r'_i$ and that $z'_1 \geq z'_2 \geq \dotsm \geq z'_{t'} \geq 0$
 are real numbers. Suppose also that
\begin{multline}
\label{GeneralDec}
\left[\prod_{i=1}^t \gamma(s+z_i,\rho_i \times \tau,\psi)\gamma(s-z_i,\rho_i^{\vee} \times \tau,\psi) \right]\gamma(s,\pi_0 \times \tau,\psi)\\
 =\left[\prod_{i=1}^{t'} \gamma(s+z'_i,\rho'_i \times \tau,\psi)\gamma(s-z'_i,{\rho'_i}^{\vee} \times \tau,\psi) \right]\gamma(s,\pi'_0 \times \tau,\psi)
\end{multline}
for all irreducible unitary supercuspidal representations $\tau$ of ${\rm GL}_n(F)$ with $1 \leq n \leq r$. Then $t=t'$ and there exists a permutation $\sigma$
of $\{ 1,2,\dotsm,t\}$ such that
\begin{enumerate}[label=$(\mathrm{\roman*})$]
\item $r_i=r'_{\sigma(i)}$ for all $i=1,2,\dotsm,t$;
\item $z_i=z'_{\sigma(i)}$ and $\rho_i \cong \rho_{\sigma(i)}$ for all $i=1,2,\dotsm,t$;
\item $\gamma(s,\pi_0 \times \tau,\psi)=\gamma(s,\pi'_0 \times \tau,\psi)$ for all irreducible unitary supercuspidal representations $\tau$ of ${\rm GL}_n(F)$ with $1 \leq n \leq r$. 
\end{enumerate}
\end{proposition}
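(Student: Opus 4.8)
The plan is to obtain parts (i)--(ii) by comparing the real poles of the two sides of \eqref{GeneralDec}, viewed as rational functions of $q^{-s}$, and then to deduce (iii) formally: once we know $t=t'$ and that a permutation $\sigma$ as in (i)--(ii) exists, the products $\prod_{i=1}^t\gamma(s+z_i,\rho_i\times\tau,\psi)\gamma(s-z_i,\rho_i^\vee\times\tau,\psi)$ on the two sides agree, and dividing them out of \eqref{GeneralDec} leaves $\gamma(s,\pi_0\times\tau,\psi)=\gamma(s,\pi_0'\times\tau,\psi)$ for every irreducible unitary supercuspidal $\tau$ of $\mathrm{GL}_n(F)$ with $1\le n\le r$. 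We prove (i)--(ii) by induction on $t+t'$; the case $t+t'=0$ is immediate, and for $t+t'\ge 1$ we may interchange the two sides of \eqref{GeneralDec} (it is symmetric) and assume $t\ge 1$. Throughout we use the pole dictionary of Proposition \ref{PoleDecomposition}, together with the simplicity of the poles of $L(s,\rho\times\rho^\vee)$ for $\rho$ unitary supercuspidal: for a unitary supercuspidal $\sigma$ of rank $\le r$, the factor $\gamma(s+z_i,\rho_i\times\sigma,\psi)$ has at most a simple real pole, at $s=1-z_i\le 1$, occurring only if $\sigma\cong\rho_i^\vee$; the factor $\gamma(s-z_i,\rho_i^\vee\times\sigma,\psi)$ has at most a simple real pole, at $s=1+z_i\ge 1$, only if $\sigma\cong\rho_i$; and $\gamma(s,\pi_0\times\sigma,\psi)$ has at most a simple real pole, at $s=1$, only if $\sigma\cong\sigma^\vee$, with zeros of these three factors located at $s=-z_i$, at $s=z_i$, and nowhere in $\mathrm{Re}(s)>0$, respectively.

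Set $z:=z_1=\max_i z_i\ge 0$, and suppose first that $z>0$. Then for each unitary supercuspidal $\sigma$ of rank $\le r$ the rightmost real pole of the left side of \eqref{GeneralDec} lies at $s=1+z$ and has order exactly $m_\sigma=\#\{\,i : z_i=z,\ \rho_i\cong\sigma\,\}$ (no other factor is singular, or vanishes, at $s=1+z$). Matching this with the right side forces $t'\ge 1$, then $z_1'=z$, and then $m_\sigma=\#\{\,j : z_j'=z,\ \rho_j'\cong\sigma\,\}$ for every $\sigma$; hence the multisets $\{\rho_i : z_i=z\}$ and $\{\rho_j' : z_j'=z\}$ coincide, and in particular they have the same cardinality $\ell\ge 1$. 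Choosing a bijection $\beta$ between the corresponding index sets with $\rho_i\cong\rho_{\beta(i)}'$ and cancelling the common factors $\gamma(s+z,\rho_i\times\tau,\psi)\gamma(s-z,\rho_i^\vee\times\tau,\psi)$ from both sides yields an identity of the same form with the same $\pi_0,\pi_0'$ and parameters $(t-\ell,t'-\ell)$. The inductive hypothesis gives $t-\ell=t'-\ell$ (hence $t=t'$) and a matching permutation of the remaining data; combining it with $\beta$ produces the permutation $\sigma$ of (i)--(ii), and (iii) follows as explained above.

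Suppose now $z=0$, so all $z_i=0$; moreover all $z_j'=0$ as well, for otherwise the right side of \eqref{GeneralDec} would have a real pole beyond $s=1$ by the previous paragraph, while the left side, with all exponents $0$, does not. Then \eqref{GeneralDec} reads
\[
\prod_{i=1}^t\gamma(s,\rho_i\times\tau,\psi)\gamma(s,\rho_i^\vee\times\tau,\psi)\,\gamma(s,\pi_0\times\tau,\psi)=\prod_{j=1}^{t'}\gamma(s,\rho_j'\times\tau,\psi)\gamma(s,{\rho_j'}^\vee\times\tau,\psi)\,\gamma(s,\pi_0'\times\tau,\psi),
\]
and for a unitary supercuspidal $\sigma$ of rank $\le r$ the order of the pole at $s=1$ on the left equals $a(\sigma)+a(\sigma^\vee)+\varepsilon_0(\sigma)$, where $a(\sigma)=\#\{i:\rho_i\cong\sigma\}$ and $\varepsilon_0(\sigma)\in\{0,1\}$ records whether $\gamma(s,\pi_0\times\sigma,\psi)$ has a pole at $s=1$ (which forces $\sigma\cong\sigma^\vee$); on the right it equals $a'(\sigma)+a'(\sigma^\vee)+\varepsilon_0'(\sigma)$. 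For $\sigma$ not self-dual this gives $a(\sigma)+a(\sigma^\vee)=a'(\sigma)+a'(\sigma^\vee)$; for $\sigma$ self-dual it reads $2a(\sigma)+\varepsilon_0(\sigma)=2a'(\sigma)+\varepsilon_0'(\sigma)$, and reducing modulo $2$ forces $\varepsilon_0(\sigma)=\varepsilon_0'(\sigma)$ and then $a(\sigma)=a'(\sigma)$. In every case the multiset of unordered pairs $\{\rho_i,\rho_i^\vee\}$ equals that of the $\{\rho_j',{\rho_j'}^\vee\}$; since $\gamma(s,\rho_i\times\tau,\psi)\gamma(s,\rho_i^\vee\times\tau,\psi)$ depends only on the pair $\{\rho_i,\rho_i^\vee\}$, cancelling these common factors turns the displayed identity into $\gamma(s,\pi_0\times\tau,\psi)=\gamma(s,\pi_0'\times\tau,\psi)$, which is (iii), and also yields $t=t'$ with a permutation $\sigma$ satisfying $r_i=r_{\sigma(i)}'$, $z_i=z_{\sigma(i)}'=0$, $\rho_i\cong\rho_{\sigma(i)}'$; the residual ambiguity $\rho_i\leftrightarrow\rho_i^\vee$ at exponent $0$ is harmless, since replacing a unitary supercuspidal inducing datum by its contragredient in Lemma \ref{SOcuspsupport} leaves $\pi$ unchanged ($\rho^\ast\cong\rho^\vee$ and the relevant Weyl element of the Levi acts by $a\mapsto a^\ast$). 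The degenerate possibility $t=0<t'$ cannot occur: taking $\sigma=\rho_1'$ would force $\gamma(s,\pi_0\times\rho_1',\psi)$ to have a pole at $s=1$, whence $\rho_1'\cong{\rho_1'}^\vee$, contradicting $a'(\rho_1')\ge 1$ together with $2a'(\rho_1')+\varepsilon_0'(\rho_1')=\varepsilon_0(\rho_1')\le 1$ (and similarly if some $z_j'>0$).

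I expect the case $z=0$ to be the main obstacle: there the inner factor $\gamma(s,\pi_0\times\tau,\psi)$ also contributes to the pole at $s=1$ for self-dual $\tau$, so the multiplicities of the $\rho_i$ cannot be extracted directly, and the parity argument built on $\varepsilon_0(\sigma)\in\{0,1\}$ is exactly what disentangles the contribution of $\pi_0$ from those of the $\rho_i$; one must also recognise, and then dismiss, the $\rho_i\leftrightarrow\rho_i^\vee$ ambiguity at exponent $0$. Everything else is careful bookkeeping with the pole data of Proposition \ref{PoleDecomposition}.
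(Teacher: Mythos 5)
Your proof is correct and relies on the same pole dictionary (Proposition \ref{PoleDecomposition}), but it is organized differently from the paper's argument. The paper works with a single test $\tau = \rho_1$, splits into three cases according to which factor on the right of \eqref{GeneralDec} produces the pole at $s = 1 + z_1$, and in the residual self-dual case derives a contradiction from a simple-versus-double pole count at $s=1$; it then cancels one pair of $\mathrm{GL}$-factors at a time and iterates. You instead match exact pole orders for every test $\sigma$ at the rightmost real pole: for $z_1>0$ this identifies the entire top-exponent multisets in one pass, and for $z_1=0$ the mod-$2$ parity of the pole order at $s=1$ cleanly separates the $\{0,1\}$-valued contribution of $\pi_0$ from the even contribution of the $\rho_i$'s, giving $a(\sigma)=a'(\sigma)$ directly. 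Your parity trick is a tidier packaging of the paper's simple-versus-double contradiction, and your multiset cancellation is a block version of the paper's one-at-a-time iteration. Both proofs quietly use the simplicity of the real pole of $\gamma(s,\rho\times\rho^{\vee},\psi)$ for $\rho$ unitary supercuspidal on $\mathrm{GL}$ (standard, but not explicitly recorded in Proposition \ref{PoleDecomposition}, and also needed for the paper's case (3)), and both dismiss the $\rho_i\leftrightarrow\rho_i^{\vee}$ ambiguity at exponent zero by the same observation that replacing a unitary supercuspidal inducing factor by its contragredient leaves the induced representation unchanged.
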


We now come to the main result of the local converse theorem over local function fields, whose technique can be uniformly adapted to the corresponding result over $p$-adic fields.
This seems to recover and strengthen the work of Jiang and Soudry \cite{JS03}. 

\begin{theorem}
Let $\pi$ and $\pi'$ be irreducible admissible generic representations of $G$. If
 \[
 \gamma(s,\pi \times \tau,\psi)=\gamma(s,\pi' \times \tau,\psi)
 \]
 for all irreducible generic supercuspidal representations $\tau$ of ${\rm GL}_n(F)$ and for all $n$ with $1 \leq n \leq r$, then $\pi \cong \pi'$. 
\end{theorem}

\begin{proof}
We assume that $\pi$ is a subquotient of the normalized induced representation 
\[
 \Pi= {\rm Ind}^{{\rm SO}_{2r+1}(F)}_{\mathcal{P}_{2r+1}}(\rho_1|\det|^{z_1} \otimes \dotsm \otimes \rho_t|\det|^{z_t}\otimes \pi_0),
\]
with $z_1 \geq z_2 \geq \dotsm \geq z_t \geq 0$ and  $\pi'$ is a subquotient of the normalized induced representation 
\[
  \Pi'=  {\rm Ind}^{{\rm SO}_{2r+1}(F)}_{\mathcal{P}'_{2r+1}}(\rho'_1|\det|^{z'_1} \otimes \dotsm \otimes \rho'_{t'}|\det|^{z'_{t'}}\otimes \pi'_0),
\]
with $z'_1 \geq z'_2 \geq \dotsm \geq z'_{t'} \geq 0$. In light of Proposition \ref{SO2n+1property}-\ref{SO2n+1propert-4}, it follows from Theorem \ref{SO-converse-supercusp} coupled with Proposition \ref{SO-factorization} that $\Pi$ is equivalent to $\Pi'$. We derive our conclusion from the uniqueness of the generic constituent in the induced representation $\Pi \cong \Pi'$.
\end{proof}

With Theorem \ref{SO2n-equality} in mind, we obtain the injectivity of the local Langlands lift or transfer.

\begin{corollary} 
\label{SO2n+1-transfer}
Let ${\rm Irr}^{\rm gen}_{\rm cusp}(G)$ be the set of isomorphic classes of
irreducible generic supercuspidal representations of $G$ and let ${\rm Irr}^{\rm isob}_{\rm cusp}({\rm GL}_{2r}(F))$
be the set of isomorphic classes of irreducible normalized induced representations of ${\rm GL}_{2r}(F)$ of the form ${\rm Ind}(\rho_1 \otimes \dotsm \otimes \rho_t)$ 
where each $\rho_i$ is an irreducible supercuspidal representation of ${\rm GL}_{2r_i}(F)$ such that $L(s,\rho_i,\wedge^2)$
has a pole at $s=0$ and $\rho_i \not\cong \rho_j$ for $i \neq j$. Then there is a unique injective map
\[
 \ell : {\rm Irr}^{\rm gen}_{\rm cusp}(G) \rightarrow {\rm Irr}^{\rm isob}_{\rm cusp}({\rm GL}_{2r}(F))
\]
satisfying
\begin{equation}
\label{SO-functorial-equality}
\gamma(s,\pi \times \tau, \psi)=\gamma(s,\ell(\pi) \times \tau,\psi)
\end{equation}
for any irreducible supercuspidal representation $\tau$ of ${\rm GL}_n(F)$ with $1 \leq n \leq r$.
\end{corollary}

\begin{proof}
The local Langlands lift satisfying \eqref{SO-functorial-equality} is given by Lomel\'{\i} \cite[Theorem 9.6]{Lom09}.
The injectivity of the map $\ell$ follows from Theorem \ref{SO-converse-supercusp}. The uniqueness of such a map $\ell$
is a consequence of the local converse theorem for ${\rm GL}_{2r}(F)$ established by Chai \cite{Cha19} and Jacquet--Liu \cite{JL18}.
\end{proof}

The local Langlands transfer $\ell$ to ${\rm GL}_{2r}(F)$ is expected to be surjective, and as a matter of fact this is true in the case of characteristic zero at least \cite[Proposition 6.1]{JS03}.

\section{The Local Converse Theorem for Sp$(2r)$}
\label{LCSp2r}

\subsection{Structure theory of symplectic groups and Weil representations}
For a positive integer $k$, we defined the element $J_k \in {\rm GL}_k(F)$ by
\[
J_k= \begin{pmatrix} &&1\vspace{-1ex} \\ &\vspace{-1ex}\iddots& \\ \vspace{-1ex}1 &&  \end{pmatrix}.
\]
We define
\[
 {\rm Sp}_{2r}(F)=\left\{ g \in {\rm GL}_{2r}(F) \, \middle|\,  \det g=1, {^tg}\begin{pmatrix} & J_r \\ -J_r &  \end{pmatrix}g=\begin{pmatrix} & J_r \\ -J_r &  \end{pmatrix} \right\}.
\]
We let 
\[
  w_{\ell}=\begin{pmatrix} & J_r \\ -J_r &  \end{pmatrix}
\]
represent the long Weyl element of ${\rm Sp}_{2r}(F)$. Let $B_{2r}=A_{2r}U_{2r}$ be the standard upper triangular Borel subgroup of $ {\rm Sp}_{2r}(F)$
with the maximal torus $A_{2r}$ and the standard maximal unipotent subgroup $U_{2r}$.

\par
We let $Q_{2n}$ be the Siegel parabolic subgroup, which has the Levi decomposition $Q_{2n}=M_{2n} \ltimes V_{2n}$ with 
\[
 M_{2n}=\left\{ {\textbf m}_n(a):= \begin{pmatrix} a & \\ & a^{\ast} \end{pmatrix}  \,\middle|\, a \in {\rm GL}_n(F) \right\} \cong {\rm GL}_n(F)
\]
and 
\[
 V_{2n}=\left\{ {\textbf u}_n(b):=\begin{pmatrix} I_n & b \\ & I_n  \end{pmatrix}  \in  {\rm Sp}_{2n}(F) \right\},
\]
where $a^{\ast}=J_n \prescript{t}{}{a}^{-1} J_n$. The condition ${\textbf u}_n(b) \in {\rm Sp}_{2n}(F)$ imposes the relation $J_nb=\prescript{t}{}{b}J_n$.
To alleviate notational burden, we simply write $B=B_{2r}$, $A=A_{2r}$, $U=U_{2r}$, and $G={\rm Sp}_{2n}(F)$.

\par
For $n < r$, we denote
\[
  w_{n,r-n}=\begin{pmatrix} &I_n&& \\ I_{r-n} &&& \\ &&&I_{r-n} \\ &&I_{n}& \end{pmatrix} \quad \text{and} \quad w_0=\begin{pmatrix} & I_n \\ -I_n & \end{pmatrix}.
\]
We embed ${\rm Sp}_{2n}(F)$ into $G$ via
\[
  j_{n}(g)=\begin{pmatrix} I_{r-n}&& \\ &g&\\ &&I_{r-n} \end{pmatrix}.
\]
and we identify an element of  ${\rm Sp}_{2n}(F)$ with an element of $G$ through the embedding $j$ without any further announcement.

\par
Let $\widetilde{\rm Sp}_{2n}$ be the metaplectic double cover of ${\rm Sp}_{2n}(F)$, realized using the normalized Rao cocycle \cite{Rao93}, which is a map $c : {\rm Sp}_{2n}(F) \times {\rm Sp}_{2n}(F) \rightarrow \{ \pm 1\}$. A prototype element of ${\rm Sp}_{2n}$ takes a form $(g,\epsilon) \in  {\rm Sp}_{2n}(F) \times \{ \pm 1\}$ with the multiplication 
\[
 (g_1,\epsilon_1)(g_2,\epsilon_2)=(g_1g_2,\epsilon_1\epsilon_2c(g_1,g_2)).
\]
An element $g \in {\rm Sp}_{2n}(F)$ can be viewed as $\textbf{s}(g) \in \widetilde{\rm Sp}_{2n}$ via $g \mapsto \textbf{s}(g):=(g,1)$.
In general, the map $g \mapsto \textbf{s}(g)$ is not a group homomorphism. For each set $Y \subset {\rm Sp}_{2n}(F)$, we let $\widetilde{Y}=pr^{-1}(Y)$
the metaplectic preimage of $Y$ under the canonical projection $pr:  \widetilde{\rm Sp}_{2n} \rightarrow  {\rm Sp}_{2n}(F)$ given by $pr (g,\epsilon)=g$
for $g \in  {\rm Sp}_{2n}(F)$ and $\epsilon \in \{ \pm 1 \}$.

\par
Let $F^{2n}$ be the $2n$-dimensional space of row vectors. We equip $F^{2n}$ with a symplectic form $\langle \cdot ,\cdot \rangle$ defined by $\langle v_1,v_2 \rangle=2v_1 w_{\ell} \prescript{t}{}{v_2}$. Let $\mathcal{H}(F^{2n})=F^{2n} \oplus F$ be the $(2n+1)$-dimensional Heisenberg group with the binary operation defined by
\[
  [v_1,t_1]\cdot  [v_2,t_2]=\left[ v_1+v_2, t_1+t_2+\frac{1}{2}\langle v_1,v_2 \rangle \right], \quad v_1, v_2 \in F^n, t_1, t_2 \in F.
\]
We define the following subgroups of  $\mathcal{H}(F^{2n})$:
\[
  X_n=\{ [(x,0),0] \,|\, x \in F^n\},\quad Y_n=\{ [(0,y),0] \,|\, y \in F^n\}, \quad Z_n=\{ [(0,0),z] \,|\, z \in F\}.
\]
Let 
\[
  \mathcal{H}_n=\left\{ \begin{pmatrix} 1 &x&y& z \\ &I_n&& J_n\prescript{t}{}{y} \\ &&I_n&-J_n\prescript{t}{}{x} \\ &&&1  \end{pmatrix} \,\middle|\, x,y \in F^n, z \in F \right\}.
\]
The group $\mathcal{H}_n$ can be viewed as the Heisenberg group $\mathcal{H}(F^{2n})$.
Let $\psi$ be a non-trivial additive character of $F$.
Let $\mathcal{S}(F^n)$ be the space of Schwartz–Bruhat functions on the row space $F^n$. We define the Fourier transform $\hat{\phi}$ of $\phi \in \mathcal{S}(F^n)$ by
\[
  \hat{\phi}(y)=\int_{F^n} \phi(x) \psi(2x J_n \prescript{t}{}{y}) \,dx.
\]
For $a \in F^{\times}$, let $\psi_a$ be the character of $F$ defined by $\psi_a(x)=\psi(ax)$.
Let $\gamma(\psi)$ be the Weil index of $x \mapsto \psi(x^2)$. We put
\[
  \gamma_{\psi}(a)=\frac{\gamma(\psi_a)}{\gamma(\psi)}.
\]
What is particularly important is that
\[
\gamma_{\psi}(1)=1, \quad \gamma_{\psi}(ab)=\gamma_{\psi}(a)\gamma_{\psi}(b)(a,b), \quad \gamma_{\psi}(a)=\gamma_{\psi}(a^{-1}), \quad \text{and} \quad \gamma_{\psi^{-1}}=\gamma^{-1}_{\psi}
\]
for $a, b \in F^{\times}$, where  $( \cdot ,\cdot )$ is the Hilbert symbol of $F$. Let $\omega_{\psi}$ be the Weil representation of $\mathcal{H}_n \rtimes \widetilde{\rm Sp}_{2n}$. It satisfies the following formul{\ae}:
\begin{itemize}
\item $\omega_{\psi}([(x,0),z])\phi(\xi)=\psi(z)\phi(\xi+x)$,
\item $\omega_{\psi}([(0,y),z])\phi(\xi)=\psi(2\xi J_n\prescript{t}{}{y})\phi(\xi)$,
\item $\omega_{\psi}({\textbf m}_n(a),\epsilon)=\epsilon \gamma_{\psi}(\det a)|\det a|^{1/2} \phi(\xi a)$,
\item $\omega_{\psi}({\textbf u}_n(b),\epsilon)\phi(\xi)=\epsilon \psi(\xi J_n\prescript{t}{}{b}\prescript{t}{}{\xi})\phi(\xi)$,
\item $\omega_{\psi}(w_0)\phi(\xi)=\beta_{\psi}\hat{\phi}(\xi)$,
\end{itemize}
where $\phi \in \mathcal{S}(F^n)$ and $\beta_{\psi}$ is a certain fixed eight root of unity \cite[p.404]{Kap15}.

\subsection{Uniqueness of Fourier--Jacobi models} Let $P=MN$ be the parabolic subgroup of $G$ with the Levi subgroup
\[
  M=\{ {\rm diag}(a_1,\dotsm,a_{r-n},g,a^{-1}_{r-n},\dotsm,a^{-1}_1) \,|\, a_i \in F^{\times}, g \in {\rm Sp}_{2n}(F) \}  \cong (F^{\times})^{r-n} \times {\rm Sp}_{2n}(F).
\]
We define the character on $N$ by
\[
  \psi_N^{-1}(u):=\psi^{-1} \left( \sum_{i=1}^{r-n-1} u_{i,i+1} \right) \;\; \text{for} \;\; u=(u_{i,j})_{1 \leq i,j \leq 2r} \in N.
\]
Let
 \[
 H=
 \begin{cases}
 j_{n}({\rm Sp}_{2n}(F)) \ltimes N & \text{for} \quad 1\leq n  < r, \\
  {\rm Sp}_{2r}(F) & \text{otherwise.}
 \end{cases}
 \]
In the matrix form, we find
\[
  H=\left\{ \begin{pmatrix} u& \ast & \ast \\ &g& \ast \\ && u^{\ast} \end{pmatrix} \,\middle|\, u \in U_{{\rm GL}_{r-n}(F)}, g \in {\rm Sp}_{2n}(F) \right\}.
\]
The group $\widetilde{\rm Sp}_{2n} \ltimes \mathcal{H}_n$ can be viewed as a subgroup of $\widetilde{H}=\widetilde{\rm Sp}_{2n} \ltimes N$ via 
the natural embedding $j_{n+1}$. We define the representation $\nu_{\overline{\psi}}$ of $\widetilde{H}$ by
\[
 \nu_{\overline{\psi}}(uh\widetilde{g})=\overline{\psi}_N(u) \overline{\omega}_{\overline{\psi}}(h \widetilde{g}) \quad \text{for}\quad u=\begin{pmatrix} z & v_1 & v_2 \\ &I_{2n+2}& v^{\ast}_1 \\  && z^{\ast}\end{pmatrix} \in N, h \in \mathcal{H}_n,  \widetilde{g} \in \widetilde{\rm Sp}_{2n}. 
\]
The pair $(\widetilde{H}, \nu_{\overline{\psi}})$ is called a {\it Fourier--Jacobi data} of ${\rm Sp}_{2r}(F)$. Since the group  $j_{n}({\rm Sp}_{2n}(F))$ is a quotient of $H$, a representation $\sigma$ of  $\widetilde{\rm Sp}_{2n}$ is deemed as a representation of $\widetilde{H}$, which allow us to construct the tensor product representation $\sigma \otimes \nu_{\overline{\psi}}$ of $\widetilde{H}$.

\begin{proposition}[Uniqueness of Fourier--Jacobi models]
 \label{F-J}
  Let $\pi$ be an irreducible smooth representation of ${\rm Sp}_{2r}(F)$ and $\sigma$
an irreducible genuine smooth representation of $\widetilde{\rm Sp}_{2n}$.  Then we have
 \[
  \dim {\rm Hom}_{H}(\pi,\sigma \otimes \nu_{\overline{\psi}}) \leq 1.
 \]
\end{proposition}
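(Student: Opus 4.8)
The plan is to follow the template of the proof of Proposition~\ref{Bessel}: reduce the problem to a ``basic'' equal rank case, quote the multiplicity-one theorem there, and then pass from irreducible $\pi$ to finitely generated representations of Whittaker type by means of Bernstein--Zelevinsky derivatives. For the basic case $n=r$, the unipotent radical of $P$ collapses, $H={\rm Sp}_{2r}(F)$, and after unwinding the definition of $\nu_{\overline{\psi}}$ and realizing $\overline{\omega}_{\overline{\psi}}$ on a fixed model through the Stone--von Neumann theorem, the space ${\rm Hom}_H(\pi,\sigma\otimes\nu_{\overline{\psi}})$ is identified with the space of Fourier--Jacobi functionals attached to the pair $(\pi,\sigma)$ in the basic case of the local Gan--Gross--Prasad problem. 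Multiplicity one for these functionals is known over $p$-adic fields by the work of Sun on Fourier--Jacobi models (cf.\ \cite[\S 16]{GGP12}, \cite{Kap15}); over a local function field of characteristic $\neq 2$ the same conclusion follows, either from the Whittaker-type argument mentioned below or by reproving the relevant Gelfand-pair statement, whose proof is insensitive to the characteristic of $F$.

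For the general case $1\le n<r$, I would deduce the bound from the basic case via the Fourier--Jacobi analogue of \cite[Theorem 15.1]{GGP12}: one peels the unipotent radical $N$ of $H$ away one root subgroup at a time --- moving from one parabolic datum to an adjacent one exactly as in the orthogonal case --- and, invoking the exactness of the Jacquet functor together with Frobenius reciprocity at each stage, one converts the period over $H=j_n({\rm Sp}_{2n}(F))\ltimes N$ against $\sigma\otimes\nu_{\overline{\psi}}$ into a Fourier--Jacobi period of equal rank type, to which the basic case applies. Throughout, the genuine double cover $\widetilde H=\widetilde{\rm Sp}_{2n}\ltimes N$, the restriction of the Rao cocycle $c$ to the subgroups in play, and the compatibility of $\overline{\omega}_{\overline{\psi}}$ with the embedding $\mathcal{H}_n\rtimes\widetilde{\rm Sp}_{2n}\hookrightarrow\widetilde H$ furnished by $j_{n+1}$ all have to be carried along unchanged. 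This is the step I expect to be the main obstacle: one must verify that the successive reductions over $N$ produce a genuine \emph{isomorphism} of Hom-spaces --- so that the upper bound is transported in the correct direction, and not merely obtained up to a surjection --- and that the cocycle cancellations together with the normalizations built into $\omega_{\overline{\psi}}$ (the Weil indices $\gamma_{\psi}$, the fixed eighth root of unity $\beta_{\psi}$) remain consistent at every stage.

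Finally, exactly as in the proof of Proposition~\ref{Bessel}, the bound persists for every finitely generated representation of ${\rm Sp}_{2r}(F)$ of Whittaker type, not only for irreducible $\pi$: this is \cite[\S 8]{Sou93} (cf.\ \cite{Kap15}), and it rests on the theory of derivatives of Bernstein and Zelevinsky \cite{BZ76}. This last observation is in fact what makes the argument uniform in the characteristic of $F$, since Soudry's derivation of the one-dimensionality for representations of Whittaker type does not use any restriction on the residue characteristic.
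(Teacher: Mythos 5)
Your proposal follows the same overall skeleton as the paper's proof: reduce the general case $n<r$ to the basic equal-rank case via the Fourier--Jacobi analogue of the GGP reduction, invoke multiplicity one there, and then note the extension to finitely generated representations of Whittaker type via Bernstein--Zelevinsky derivatives as in Soudry's~\S 8. The second and third steps match the paper (except for a small slip: for the Fourier--Jacobi reduction you should cite \cite[Theorem~16.1]{GGP12}, not Theorem~15.1, which is the Bessel/orthogonal counterpart). Your concerns about cocycle consistency in the reduction are already taken care of in~\cite{GGP12}, so that part is more cautious than necessary but not wrong.

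The genuine gap is in the basic case $n=r$ over a local function field. You assert that the Gelfand-pair argument of Sun (characteristic zero) ``is insensitive to the characteristic of $F$,'' and separately that the Whittaker-type argument ``mentioned below'' could do the job. Neither alternative is adequate as stated. The characteristic-zero multiplicity-one proofs go through generalized Harish-Chandra descent, distributional analysis on $p$-adic manifolds, and other techniques whose transfer to positive characteristic is nontrivial; the paper does \emph{not} claim characteristic-insensitivity but instead cites Mezer's recent theorem \cite[Theorem~1.11]{Mez}, which is precisely the positive-characteristic extension. Without that reference (or an equivalent), your step~(b) is unsubstantiated. Your step~(a) also does not fill the gap: the Bernstein--Zelevinsky derivative argument of Soudry and Gelbart--Piatetski-Shapiro handles the passage from irreducible to finitely generated Whittaker-type representations and works with $\sigma$ arising from parabolic induction from generic data, not with an arbitrary irreducible genuine $\sigma$ of $\widetilde{\rm Sp}_{2n}$. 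It supplements the basic case (this is exactly the paper's ``for the record'' remark), but it does not prove it. You should replace the vague insensitivity claim with an explicit appeal to Mezer's result.
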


\begin{proof}
The equal rank case $(n=r)$ is dealt with very recently in \cite[Theorem 1.11]{Mez}. In \cite[Theorem 16.1]{GGP12}, Gan, Gross, and Prasad
established that the multiplicity one theorem for the general rank case $(n < r)$ can be deduced from the equal rank case $(n=r)$.
For the record, it is noteworthy that the result extends to finitely generated (smooth and generic, as always) representations. To this end, we use the theory of derivatives of Bernstein and Zelevinski \cite{BZ76}, as done in \cite[\S 11]{GP87}, \cite[Appendix]{GSR99} and \cite[Section 8]{Sou93} (cf. \cite[Section 3.1]{Kap13}, \cite[p.407]{Kap15}).
\end{proof}

The Frobenius reciprocity guarantees that  $\pi$ can be embedded in the space ${\rm Ind}^{{\widetilde{\rm Sp}}_{2r}}_H(\sigma \otimes \nu_{\overline{\psi}})$. We call this unique embedding, a {\it Fourier--Jacobi model} for $\pi$ corresponding to the data $(H,\sigma \otimes \nu_{\overline{\psi}})$.

\subsection{The local zeta integrals and gamma factors}
We let $n \leq r$ be a positive integer and We denote by $(\tau,V_{\tau})$ an irreducible $\psi^{-1}_{U_{{\rm GL}_{n}(F)}}$-generic representation of  ${\rm GL}_n(F)$.
For $s \in \mathbb{C}$, the genuine representation $\tau_s \otimes \gamma^{-1}_{\psi}$ of the double cover $\widetilde{M}_{2n}$ of $M_{2n}$ is defined by
\[
 \tau_s \otimes \gamma^{-1}_{\psi}(({\textbf m}_n(a),\epsilon))=\epsilon\gamma^{-1}_{\psi}(\det a)|\det a|^{s-1/2} \tau(a) \quad \text{for $a \in {\rm GL}_{n}(F)$ and $\epsilon \in \{ \pm 1 \}$.}
\]
We build the normalized induced representation $\widetilde{I}(s,\tau,\psi)={\rm Ind}^{{\widetilde{\rm Sp}}_{2n}}_{\widetilde{Q}_{2n}}(\tau_s\otimes \gamma^{-1}_{\psi})$.
The space  $\widetilde{I}(s,\tau,\psi)$ may be then viewed as the space of functions $\xi_s : {\widetilde{\rm Sp}}_{2n} \rightarrow V_{\tau}$ which transform in the following way
\[
 \xi_s(({\textbf m}_n(a),\epsilon){\textbf u}_n(b)\widetilde{g})=\epsilon \delta^{1/2}_{Q_{2n}}({\textbf m}_n(a))|\det a|^{s-1/2}\gamma^{-1}_{\psi}(\det a) \tau(a)\xi_s(\widetilde{g})
\]
for any $({\textbf m}_n(a),\epsilon) \in \widetilde{M}_{2n}$, ${\textbf u}_n(b) \in V_{2n}$, and $\widetilde{g} \in  {\widetilde{\rm Sp}}_{2n}$, where $\delta_{Q_{2n}}$ is a modulus 
character computed by $\delta_{Q_{2n}}(({\textbf m}_n(a),\epsilon))=|\det a|^{n+1}$ for $a \in {\rm GL}_n(F)$ and $\epsilon \in \{\pm 1 \}$ (see \cite[\S 4.1]{CST17} and \cite[Example 4.2-(1)]{Kim04}). We fix a nonzero Whittaker functional $\lambda \in {\rm Hom}_{U_{{\rm GL}_{n}(F)}}(\tau,\psi^{-1}_{U_{{\rm GL}_{n}(F)}})$. For $\xi_s \in \widetilde{I}(s,\tau,\psi)$, let $ f_{\xi_s} : {\widetilde{\rm Sp}}_{2n} \times {\rm GL}_n(F)$ be the $\mathbb{C}$-valued function defined by
\[
 f_{\xi_s}(\widetilde{g},a)=\lambda(\tau(a)\xi_s(\widetilde{g})) \quad \text{for $\widetilde{g} \in {\widetilde{\rm Sp}}_{2n}$ and $a \in  {\rm GL}_n(F)$.}
\]
Let $V^{{\widetilde{\rm Sp}}_{2n}}_{\widetilde{Q}_{2n}}(s,\mathcal{W}(\tau,\psi^{-1}_{U_{{\rm GL}_{n}(F)}}))$ be the space of functions $\{ f_{\xi_s} \,|\, \xi_s \in \widetilde{I}(s,\tau,\psi) \}$.
We define a (standard) intertwining operator 
\[
M(s,\tau,\psi^{-1}) : V^{{\widetilde{\rm Sp}}_{2n}}_{\widetilde{Q}_{2n}}(s,\mathcal{W}(\tau,\psi^{-1}_{U_{{\rm GL}_{n}(F)}})) \rightarrow V^{{\widetilde{\rm Sp}}_{2n}}_{\widetilde{Q}_{2n}}(1-s,\mathcal{W}(\tau^{\ast},\psi^{-1}_{U_{{\rm GL}_{n}(F)}}))
\]
 by
 \[
  M(s,\tau,\psi^{-1})f_s(\widetilde{g},a)=\int_{V_{2n}} f_s(\textbf{s}(w_0^{-1}v)\widetilde{g},d_na^{\ast}) \,dv, \quad \widetilde{g} \in \widetilde{\rm Sp}_{2n}(F), a \in {\rm GL}_n(F).
 \]

\par
Let $\psi_{U}$ be the generic character of $U$ defined by
\[
  \psi_{U}(u)=\psi \left( \sum_{i=1}^r u_{i,i+1} \right), \quad u=(u_{i,j})_{1 \leq i,j \leq 2r} \in U.
\]
For $W \in \mathcal{W}(\pi,\psi_U)$ $\phi \in \mathcal{S}(F^n)$, and $f_s \in V^{{\widetilde{\rm Sp}}_{2n}}_{\widetilde{Q}_{2n}}(s,\mathcal{W}(\tau,\psi^{-1}_{U_{{\rm GL}_{n}(F)}}))$ over non-archimedean local fields including positive characteristic of our primary interest, 
the {\it zeta} integrals 
\begin{multline*}
  \Psi(W,\phi,f_s)
  =\int_{U_{2n} \backslash {\rm Sp}_{2n}(F)} \int_{F^n}   \int_{{\rm Mat}_{r-n-1 \times n}(F)} \\  W \left( w_{n,r-n} {\textbf m}_r \begin{pmatrix} I_{r-n-1} &&y \\  &1&\\ &&I_n \end{pmatrix} j_{n+1} ( \begin{pmatrix} 1 &x&&  \\ &I_n&&  \\ &&I_n&-J_n\prescript{t}{}{x} \\ &&&1  \end{pmatrix} )j_{n}(g)   w^{-1}_{n,r-n} \right) \\
  \omega_{\overline{\psi}}(g) \phi(x)
  f_s(g,I_n) \,dy dx dg
\end{multline*}
are defined by Ginzburg, Rallis, and Soudry \cite[\S 3]{GRS98} for $n < r$, and the {\it zeta} integrals 
\[
  \Psi(W,\phi,f_s)=\int_{U_{2r} \backslash {\rm Sp}_{2r}(F)} W(g) \omega_{\overline{\psi}}(g)\phi(e_r)f_s(g,I_r) \,dg
\]
are defined by Gelbart and Piatetski-Shapiro \cite[\S 4]{GP87} for $n=r$. The integral $\Psi(W,\phi,f_s)$ converges absolutely for ${\rm Re}(s)$ sufficiently large and has meromorphic continuations to functions in $\mathbb{C}(q^{-s})$. Moreover the poles of $\Psi(W,\phi,f_s)$ belongs to a finite set which depends solely on the representation.
Since $\omega_{\overline{\psi}}$ and $f_s$ are genuine, their products $\widetilde{g} \mapsto  \omega_{\overline{\psi}}(\widetilde{g}) \phi(x)
  f_s(\widetilde{g},I_n)$ and $\widetilde{g} \mapsto \omega_{\overline{\psi}}(\widetilde{g})\phi(e_r)f_s(\widetilde{g},I_r)$ must factor through the natural map $\widetilde{\rm Sp}_{2n} \rightarrow {\rm Sp}_{2n}(F)$. We may therefore view functions $\omega_{\overline{\psi}}(\widetilde{g}) \phi(x)
  f_s(\widetilde{g},I_n)$ and $ \omega_{\overline{\psi}}(\widetilde{g})\phi(e_r)f_s(\widetilde{g},I_r)$ as a function on ${\rm Sp}_{2n}(F)$ and omit the section ${\textbf s}$.

\begin{proposition}[Local functional equation]
There exists a meromorphic function $\Gamma(s,\pi \times \tau,\psi)$ in $\mathbb{C}(q^{-s})$ such that
\[
 \Psi(W,\phi,M(s,\tau,\psi^{-1})f_s)=\gamma(s,\pi \times \tau,\psi) \Psi(W,\phi,f_s)
\]
for all $W \in \mathcal{W}(\pi,\psi_U)$, $\phi \in \mathcal{S}(F^n)$, and $f_s \in V^{{\widetilde{\rm Sp}}_{2n}}_{\widetilde{Q}_{2n}}(s,\mathcal{W}(\tau,\psi^{-1}_{U_{{\rm GL}_{n}(F)}}))$.
\end{proposition}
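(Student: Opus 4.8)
The plan is to transport the proof of the local functional equation in the odd orthogonal case almost word for word, replacing the uniqueness of Bessel models (Proposition~\ref{Bessel}) by the uniqueness of Fourier--Jacobi models (Proposition~\ref{F-J}). First I would recall what has already been quoted above: by Ginzburg--Rallis--Soudry for $n<r$ and by Gelbart--Piatetski-Shapiro for $n=r$, the integral $\Psi(W,\phi,f_s)$ converges absolutely for ${\rm Re}(s)$ large and continues to a rational function of $q^{-s}$ whose poles lie in a finite set depending only on $\pi$. Since the standard intertwining operator $M(s,\tau,\psi^{-1})$ is itself rational in $q^{-s}$, the integral $\Psi(W,\phi,M(s,\tau,\psi^{-1})f_s)$ has the same properties. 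Consequently it is enough to produce the proportionality factor $\Gamma(s,\pi\times\tau,\psi)$ for all but finitely many values of $q^{s}$ and then conclude by rationality.

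The heart of the matter is a one-dimensionality statement. Outside a finite set of values of $q^{s}$ the genuine induced representation $\widetilde{I}(s,\tau,\psi)={\rm Ind}^{\widetilde{\rm Sp}_{2n}}_{\widetilde{Q}_{2n}}(\tau_s\otimes\gamma^{-1}_{\psi})$, equivalently the space $V^{\widetilde{\rm Sp}_{2n}}_{\widetilde{Q}_{2n}}(s,\mathcal{W}(\tau,\psi^{-1}_{U_{{\rm GL}_{n}(F)}}))$, is irreducible. Regarding it as the genuine representation of $\widetilde{\rm Sp}_{2n}$ entering a Fourier--Jacobi datum, Proposition~\ref{F-J} then gives that
\[
  \dim {\rm Hom}_{H}\bigl(\pi^{w_{n,r-n}}\otimes V^{\widetilde{\rm Sp}_{2n}}_{\widetilde{Q}_{2n}}(s,\mathcal{W}(\tau,\psi^{-1}_{U_{{\rm GL}_{n}(F)}})),\,\nu_{\overline{\psi}}\bigr)\le 1
\]
for all but finitely many $q^{s}$, where $\pi^{w_{n,r-n}}$ is the $w_{n,r-n}$-conjugate $g\mapsto\pi(w_{n,r-n}gw^{-1}_{n,r-n})$ of $\pi$, the Schwartz space $\mathcal{S}(F^{n})\ni\phi$ sits inside $\nu_{\overline{\psi}}$ via the Weil representation $\omega_{\overline{\psi}}$, and for $n=r$ one has $H={\rm Sp}_{2r}(F)$ with no $w_{n,r-n}$-conjugation. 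Next I would check that both sides of the functional equation define elements of this space: using the equivariance of $g\mapsto W(w_{n,r-n}gw^{-1}_{n,r-n})$ under $H$, the transformation law of $\omega_{\overline{\psi}}$ and the unipotent integrations built into $\Psi$, the trilinear form $(W,\phi,f_s)\mapsto\Psi(W,\phi,f_s)$ lies in the above ${\rm Hom}$ space for ${\rm Re}(s)$ large and hence, by meromorphic continuation, for all $s$ outside a discrete set; and since $M(s,\tau,\psi^{-1}):\widetilde{I}(s,\tau,\psi)\to\widetilde{I}(1-s,\tau^{\ast},\psi)$ is $\widetilde{\rm Sp}_{2n}$-equivariant for the right-regular action and $\tau^{\ast}\cong\tau^{\vee}$, precomposition with $M(s,\tau,\psi^{-1})$ puts $(W,\phi,f_s)\mapsto\Psi(W,\phi,M(s,\tau,\psi^{-1})f_s)$ into the same space. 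By the one-dimensionality the two forms are proportional for generic $s$, which defines $\Gamma(s,\pi\times\tau,\psi)$; choosing $W,\phi,f_s$ so that $\Psi(W,\phi,f_s)$ is a nonzero constant (possible by the cited results of Gelbart--Piatetski-Shapiro and of Ginzburg--Rallis--Soudry) and comparing rational functions of $q^{-s}$ yields $\Gamma(s,\pi\times\tau,\psi)\in\mathbb{C}(q^{-s})$ together with the asserted identity for all $s$.

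The main obstacle is not the formal proportionality argument, which is identical to the orthogonal case, but two metaplectic-specific verifications. The first is the irreducibility of the genuine induced representation $\widetilde{I}(s,\tau,\psi)$ for all but finitely many $q^{s}$, needed so that Proposition~\ref{F-J} applies. The second is that the Weil factor $\omega_{\overline{\psi}}$ and the metaplectic cover do not disturb the interpretation of $\Psi$ as a genuine Fourier--Jacobi functional: one must check that the genuineness of the inducing representation and of $\nu_{\overline{\psi}}$ cancel so that the relevant pairing descends to $H$, and that the section $\textbf{s}$ may indeed be suppressed as explained just before the proposition. Once these points are in place, the rationality of $\Gamma(s,\pi\times\tau,\psi)$ and the functional equation follow exactly as in Section~\ref{RS}.
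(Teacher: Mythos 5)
Your proposal follows the same route as the paper: both integrals are regarded as elements of the one-dimensional space of $H$-equivariant trilinear forms on $\pi^{w_{n,r-n}}\otimes V^{\widetilde{\rm Sp}_{2n}}_{\widetilde{Q}_{2n}}(s,\mathcal{W}(\tau,\psi^{-1}_{U_{{\rm GL}_{n}(F)}}))\otimes\nu_{\overline{\psi}}$, with uniqueness supplied by Proposition~\ref{F-J} once the genuine induced representation is irreducible outside a finite set of $q^{s}$, and $\Gamma(s,\pi\times\tau,\psi)$ is the resulting proportionality constant, rational in $q^{-s}$ by meromorphic continuation. The metaplectic-specific checks you flag (cancellation of genuineness and generic irreducibility) are exactly the points the paper also leaves implicit, so your argument matches the intended proof.
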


\begin{proof}
We denote by $\pi^{w_{n,r-n}}$ the representation $g \mapsto w_{n,r-n}gw^{-1}_{n,r-n}$ of $G$. 
In the domain of the convergence, the integrals $\Psi(W,\phi,f_s)$ and $ \Psi(W,\phi,M(s,\tau,\psi^{-1})f_s)$ may be regarded as trilinear forms in the space
\[
 {\rm Hom}_H(\pi^{w_{n,r-n}}\otimes V^{{\widetilde{\rm Sp}}_{2n}}_{\widetilde{Q}_{2n}}(s,\mathcal{W}(\tau,\psi^{-1}_{U_{{\rm GL}_{n}(F)}}))\otimes \nu_{\overline{\psi}},\mathbb{C}).
\]
Outside a discrete subset of $s$, $V^{{\widetilde{\rm Sp}}_{2n}}_{\widetilde{Q}_{2n}}(s,\mathcal{W}(\tau,\psi^{-1}_{U_{{\rm GL}_{n}(F)}}))$ becomes an induced irreducible representation,
for which uniqueness of Fourier--Jacobi models, Proposition \ref{F-J}, is applicable. Then local gamma factors $\gamma(s,\pi \times \tau,\psi)$ is defined as 
a proportionality constant between two zeta integrals $\Psi(W,\phi,f_s)$ and $ \Psi(W,\phi,M(s,\tau,\psi^{-1})f_s)$.
\end{proof}

The normalized intertwining operator is
\[
  N(s,\tau,\psi^{-1})=C^{\widetilde{\rm Sp}_{2n}}_{\psi}(s-1/2,\tau,w_0)M(s,\tau,\psi^{-1}),
\]
where $C^{\widetilde{\rm Sp}_{2n}}_{\psi}(s-1/2,\tau,w_0)$ is the {\it Langlands--Shahidi local coefficients} defined by Shahidi's functional equation \cite[(3.7)]{Kap15};
  \begin{multline*}
 \int_{V_{2n}} f_s({\textbf{s}(\textbf m}_n(d_n)w_0^{-1}v),I_n) \psi(v_{n,n+1}) \,dv\\
 = C^{\widetilde{\rm Sp}_{2n}}_{\psi}(s-1/2,\tau,w_0)\int_{V_{2n}} M(s,\tau,\psi^{-1})f_s ({\textbf m}_n(d_n)w_0^{-1}v,I_n) \psi(v_{n,n+1}) \,dv.
\end{multline*}
We denote by $\underset{\mathbb{C}[q^{\pm s}]^{\times}}{\sim}$ the equivalence relation of equality up to a monomial in $\mathbb{C}[q^{\pm s}]^{\times}$.

\begin{lemma}\cite[Theorem 4.2]{Szp13}
\label{metaplectic-gamma}
Let $\tau_0$ be an irreducible supercuspidal representation of ${\rm GL}_n(F)$. Then we have
\[
 C^{\widetilde{\rm Sp}_{2n}}_{\psi}(s,\tau_0,w_0) \underset{\mathbb{C}[q^{\pm s}]^{\times}}{\sim} \gamma(2s,\tau_0,\mathrm{Sym}^2,\psi),
\]
where $\gamma(s,\tau_0,{\rm Sym}^2)$ is a Langlands--Shahidi symmetric square $\gamma$-factor in
\citelist{ \cite{Lom15}*{\S 2.2} \cite{Lom16}*{\S 6.4 Theorem}}.
\end{lemma}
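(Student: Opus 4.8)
The statement is \cite[Theorem~4.2]{Szp13} when $\mathrm{char}(F)=0$, so the only task is to carry Szpruch's computation over to a local function field, and the plan is to check that every ingredient is insensitive to the characteristic. By definition $C^{\widetilde{\rm Sp}_{2n}}_{\psi}(s,\tau_0,w_0)$ is the proportionality constant in Shahidi's functional equation \cite[(3.7)]{Kap15} relating the two Jacquet integrals attached to $f_s$ and to $M(s,\tau,\psi^{-1})f_s$; its existence and its membership in $\mathbb{C}(q^{-s})$ rest on the uniqueness of Whittaker functionals together with meromorphic continuation, which are exactly the inputs underlying Lemma~\ref{CS-SO2n} and are available over $F$. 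Moreover the metaplectic data entering the construction --- the normalized Rao cocycle $c$, the Weil index $\gamma_{\psi}$ with all the identities for $\gamma_{\psi}$ recorded in \S\ref{LCSp2r}, and the Schr\"odinger model of $\omega_{\psi}$ with its action formulas --- are defined uniformly for every $F$ with $\mathrm{char}(F)\neq 2$, so the group-theoretic and combinatorial steps of Szpruch's argument are unaffected.

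The computation then proceeds as in \cite{Szp13}. The key structural input is that, through the Weil representation and the normalized Rao cocycle, the metaplectic intertwining operator $M(s,\tau,\psi^{-1})$ on $\widetilde{I}(s,\tau,\psi)$ is related to the intertwining operator for the Siegel parabolic of the \emph{linear} group $\mathrm{Sp}_{2n}(F)$, whose normalization is the symmetric-square local coefficient and contributes $\gamma(2s,\tau_0,\mathrm{Sym}^2,\psi)$ up to a monomial in $\mathbb{C}[q^{\pm s}]^{\times}$ --- this linear coefficient is legitimate in positive characteristic by \cite{Lom15,Lom16} --- together with a correction that, once $\omega_{\psi}$ is unwound on the Schr\"odinger model, becomes a Godement-Jacquet type zeta integral for $\tau_0$ twisted by Weil-index factors and hence evaluates, up to a monomial, to $\gamma(s+1/2,\tau_0,\psi)^{-1}$ by the $\mathrm{GL}_n(F)$ local functional equation, which is again characteristic-free. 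Assembling the two pieces and absorbing the monomials gives the stated equivalence. One can also bypass the explicit bookkeeping by a uniqueness argument: since we only need equality up to a unit in $\mathbb{C}[q^{\pm s}]$ it is enough to match the zeros and poles of the two sides, which are severely constrained because $\tau_0$ is supercuspidal --- for $n\geq 2$ the denominator is already a monomial since a supercuspidal representation of $\mathrm{GL}_n(F)$ has trivial standard $L$-factor --- and this can be arranged by globalizing $\tau_0$ to a cuspidal automorphic representation of $\mathrm{GL}_n(\mathbb{A}_k)$ unramified away from a single place via Theorem~\ref{globalization} with $\mathbf{G}_n=\mathrm{GL}_n$, factoring the global functional equation of the associated metaplectic Eisenstein series, and evaluating the unramified local coefficients by the metaplectic Gindikin-Karpelevich formula.

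The main obstacle is to be sure that whichever route one follows is genuinely characteristic-free at every step. For the direct computation the delicate point is the identification of the metaplectic correction with a Godement-Jacquet integral: one must verify that this uses only the formal properties of $c$, $\gamma_{\psi}$ and $\omega_{\psi}$ collected in \S\ref{LCSp2r}, with no recourse to analysis specific to characteristic zero, and I expect this to be the case since Szpruch's manipulation there is purely formal. For the uniqueness route the obstacle migrates to the positive-characteristic metaplectic Gindikin-Karpelevich formula and to the holomorphy and non-vanishing of the global intertwining operator on $\widetilde{\rm Sp}_{2n}(\mathbb{A}_k)$ --- standard facts that are, however, not completely documented over function fields. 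Since neither obstacle calls for a new idea, we are content to invoke \cite{Szp13} and regard the above as the blueprint for the transcription.
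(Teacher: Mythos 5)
Your proposal offers two possible routes, and it is the \emph{second} one (the global-to-local argument) that matches the paper's strategy; the first route (directly transcribing Szpruch's explicit local computation of $C^{\widetilde{\rm Sp}_{2n}}_{\psi}$ into positive characteristic) is not what the paper does at all. The paper twists $\tau_0$ so as to assume unitarity, globalizes via the Henniart--Lomel\'{\i} machinery (it cites \cite[Lemma 3.1]{Lom17} rather than Theorem \ref{globalization} verbatim, but the content is the same), and then plays off \emph{three} crude functional equations against one another --- the ones for the standard and Langlands--Shahidi symmetric square $\gamma$-factors and Szpruch's Theorem 4.1 for the metaplectic local coefficient --- to cancel the partial $L$-functions entirely and arrive at a product identity
\[
 \prod_{v \in S}\frac{\gamma(2s,\tau_v,\mathrm{Sym}^2,\psi_v)}{\gamma(s+1/2,\tau_v,\psi_v)}=\prod_{v \in S} C^{\widetilde{\rm Sp}_{2n}}_{\psi_v}(s,\tau_v,w_0)
\]
over the finite ramified set $S$. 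The claim at $v_0$ is then isolated by killing the other places.

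Two points in your sketch of the second route would need to be repaired to make it work. First, the globalization does \emph{not} produce a $\tau$ unramified away from a single place: there are auxiliary places in $S-\{v_0\}$ where $\tau_v$ is merely a generic constituent of a (possibly ramified) principal series. Second, because of this, the metaplectic Gindikin--Karpelevich formula --- which handles only spherical data and hence only the places outside $S$, already absorbed into the partial $L$-functions --- does not dispose of the auxiliary ramified places. The paper instead invokes Szpruch's Lemma 3.6 at those places, a local computation valid for principal series which the paper observes carries over to positive characteristic as written. This is the crucial step your sketch omits; without it the product over $S$ cannot be stripped down to a statement at $v_0$ alone. Your first route is plausible in spirit but is a substantially more laborious rewrite of Szpruch and is not the path taken here.
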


\begin{proof}
With an appropriate twist by unramified characters, we assume that $\tau_0$ is a unitary representation, and then globalize it to find a globally generic cuspidal automorphic representation $\otimes'_v \tau_v$ satisfying conditions in \cite[Lemma 3.1]{Lom17} (cf. \cite[Proposition 3.1]{Lom15}). 
The Shahidi's crude functional equations for the standard and symmetric factors \citelist{\cite{Lom09}*{Theorem 5.14} \cite{Lom16}*{Theorem 5.1}  \cite{Lom17}*{Theorem 4.3}}  (cf. \cite{Lom15}*{\S 2.3}) read
\[
 L^S(s,\tau)=\prod_{v \in S} \gamma(s,\tau_v,\psi_v) L^S(1-s,\tau^{\vee})
\]
and
\[
 L^S(s,\tau,{\rm Sym}^2)=\prod_{v \in S} \gamma(s,\tau_v,\psi_v,{\rm Sym}^2)L^S(1-s,\tau^{\vee},{\rm Sym}^2).
\]
Here, $L^S(s,\tau)$ and $L^S(s,\tau,{\rm Sym}^2)$ are partial $L$-function analogously defined as \eqref{partial-L-function} with respect to $S$, and
$\gamma(s,\tau_v,\psi_v)$ denote a Godement--Jacquet $\gamma$-factor as in \cite[\S 2.2]{Lom15}.
We rewrite the crude functional equation for metaplectic groups \cite[Theorem 4.1]{Szp13} 
\[
 \frac{L^S(2s,\tau,\mathrm{Sym}^2)}{L^S(s+1/2,\tau)}=\prod_{v \in S} C^{\widetilde{\rm Sp}_{2n}}_{\psi_v}(s,\tau_v,w_0) \frac{L^S(1-2s,\tau^{\vee},\mathrm{Sym}^2)}{L^S(-s+1/2,\tau^{\vee})}
\]
as
\[
 \prod_{v \in S}\frac{\gamma(2s,\tau_v,\mathrm{Sym}^2,\psi_v)}{\gamma(s+1/2,\tau_v,\psi_v)}=\prod_{v \in S} C^{\widetilde{\rm Sp}_{2n}}_{\psi_v}(s,\tau_v,w_0),
\]
where $\tau_v$ is the generic constituent of principal series representations for $v \in S-\{ v_0 \}$, appealing to \cite[Lemma 3.1, Remark 3.2]{Lom17}.
At the remaining places $v \in S-\{ v_0 \}$, \cite[Lemma 3.6]{Szp13} gives
\[
 C^{\widetilde{\rm Sp}_{2n}}_{\psi_v}(s,\tau_v,w_0) \underset{\mathbb{C}[q_{k_v}^{\pm s}]^{\times}}{\sim} \frac{\gamma(2s,\tau_v,\mathrm{Sym}^2,\psi_v)}{\gamma(s+1/2,\tau_v,\psi_v)}.
\]
The desired result is immediate from the fact that $\gamma(s+1/2,\tau_{v_0},\psi_{v_0})$ is invertible in $\mathbb{C}[q_{k_{v_0}}^{\pm s}]$ for any irreducible supercuspidal representation $\tau_{v_0}$. The proofs of  \cite[Theorem 4.1]{Szp13} and \cite[Lemma 3.6]{Szp13} are stated for number fields and non-archimedean local fields of characteristic zero, but the arguments work over
function fields and positive characteristic fields, respectively, without making any changes.
\end{proof}

In perspective of \cite{CST17}, the ambiguity of the equality up to $\mathbb{C}[q^{\pm s}]^{\times}$ in Lemma \ref{metaplectic-gamma} 
might be specified as the ``actual" equality. Apparently, the best thing we can do at this stage is to obtain the weak equality which is
enough for all the application coming after. Now, the local Rankin-Selberg gamma factor $\gamma(s,\pi \times \tau,\psi)$ is defined by the identity
\[
 \Psi(W,\phi,N(s,\tau,\psi^{-1})f_s)=\omega_{\pi}(-1)^n\omega_{\tau}(-1)^r\gamma(s,\pi \times \tau,\psi) \Psi(W,\phi,f_s).
\]
Consequently, we have the formula
\begin{equation}
\label{Sp2nUnnormal}
 \Gamma(s,\pi \times \tau,\psi) =\omega_{\pi}(-1)^n\omega_{\tau}(-1)^r\frac{\gamma(s,\pi \times \tau,\psi)}{C^{\widetilde{\rm Sp}_{2n}}_{\psi}(s-1/2,\tau,w_0) }
\end{equation}
which demonstrates that $\gamma(s,\pi \times \tau,\psi))$ is a rational function in $q^{-s}$. The rationale behind incorporating certain central characters $\omega_{\pi}(-1)^n\omega_{\tau}(-1)^r$
is to get the ``clean" multiplicativity formulas, Theorem \ref{SP2nproperty}-\ref{SP2nproperty-4}, similar to those satisfied by Shahidi's $\gamma$-factors.
If $r=0$, we set $G=\{1 \}$. We now turn toward concocting  a system of $\gamma$-factors on $G \times {\rm GL}_n(F)$.
 
\begin{theorem}
\label{SP2nproperty}
 Let $\pi$ and $\tau$ be a pair of irreducible admissible generic representations of ${\rm Sp}_{2r}(F)$ and ${\rm GL}_n(F)$.
The $\gamma$-factor $\gamma(s,\pi \times \tau,\psi)$ satisfies the following properties.
 \begin{enumerate}[label=$(\mathrm{\roman*})$]
\item {\rm (Naturality)} \label{SP2nproperty-1} Let $\eta : F' \rightarrow F$ be an isomorphism of local fields. Via $\eta$, $\pi$ and $\tau$ define 
irreducible admissible generic representations $\pi'$ of ${\rm Sp}_{2r}(F')$ and $\tau'$ of ${\rm GL}_n(F')$, and a nontrivial additive character $\psi'$ of $F'$.
Then we have
\[
 \gamma(s,\pi \times \tau,\psi)=\gamma(s,\pi' \times \tau',\psi').
\]
\item {\rm (Isomorphism)} \label{SP2nproperty-2}  If $\pi'$ and $\tau'$ are irreducible admissible generic representations of ${\rm Sp}_{2r}(F)$ and of ${\rm GL}_n(F)$ such that $\pi \cong \pi'$
and $\tau \cong \tau'$, then 
\[
 \gamma(s,\pi \times \tau,\psi)=\gamma(s,\pi' \times \tau',\psi).
\]
\item {\rm (Minimal cases)} \label{SP2nproperty-3}  For $r=0$, $\gamma(s,\pi \times \tau,\psi)=1$.
\item {\rm (Multiplicativity)}
\label{SP2nproperty-4} Let $\pi$ $($respectively, $\tau$$)$ be the irreducible generic quotient of a representation parabolically induced from
\[
 {\rm Ind}^{{\rm Sp}_{2r}(F)}_{\mathcal{P}_{2r}}(\pi_1 \otimes \pi_2 \otimes \dotsm \otimes \pi_t \otimes \pi_0)
\]
$($respectively $ {\rm Ind}^{{\rm GL}_{n}(F)}_{\mathcal{P}_n}(\tau_1 \otimes \tau_2 \otimes \dotsm \otimes \tau_s)$$)$, where  $\mathcal{P}_{2r}$ $($respectively $\mathcal{P}_n$$)$  is the standard  
parabolic subgroup with its Levi part isomorphic to $\prod_{i=1}^t {\rm GL}_{r_i}(F) \times {\rm Sp}_{2r_0}(F)$ 
$($respectively $\prod_{j=1}^s {\rm GL}_{n_j}(F)$$)$ associated to the partition $(r_1,\dotsm,r_t,r_0)$ of r $($respectively $(n_1,n_2,\dotsm,n_s)$ of $n$$)$. Then
\[
\gamma(s,\pi \times \tau,\psi)=\prod_{j=1}^s \gamma(s,\pi_0 \times \tau_j,\psi) \prod_{i=1}^t \prod_{j=1}^s \gamma(s,\pi_i \times \tau_j,\psi)  \gamma(s,\pi^{\vee}_i \times \tau_j,\psi).
\]
Here $\gamma(s,\pi_i \times \tau_j,\psi)$ is the ${\rm GL}_{r_i} \times {\rm GL}_{n_j}$ $\gamma$-factor defined by Jacquet, Piatetski-Shapiro and Shalika (cf. \cite{Cog14}).
When $t \geq 1$ and $r_0=0$, we make the following convention that we take $\pi_0$ to be the trivial character of ${\rm GL}_1(F)$ $($cf. \cite[\S 1.4-(iv)]{Lom15}$)$.
\item {\rm (Dependence of $\psi$)}
\label{SP2nproperty-5}
 For any $a \in F^{\times}$, let $\psi^a$ be the character given by $\psi^a(x)=\psi(ax)$. Then
\[
 \gamma(s,\pi \times \tau,\psi^a)=\omega_{\tau}(a)^{2r}|a|^{2nr(s-\frac{1}{2})}\gamma(s,\pi \times \tau,\psi).
\]
\item {\rm (Stability)} 
\label{SP2nproperty-6}
Let $\pi_1$ and $\pi_2$ be irreducible admissible generic representations of ${\rm Sp}_{2r}(F)$ with the same central character $\omega_{\pi_1}=\omega_{\pi_2}$.
For $\chi$ sufficiently highly ramified, we have
\[
 \gamma(s,\pi_1 \times \chi,\psi)= \gamma(s,\pi_2 \times \chi,\psi).
\]
\item {\rm (Global functional equation)} 
\label{SP2nproperty-7}
We assume that $\pi$ and $\tau$ are globally generic irreducible cuspidal automorphic representations of ${\rm Sp}_{2r}(\mathbb{A}_k)$ and ${\rm GL}_n(\mathbb{A}_k)$.
Let $S$ be a finite set of places such that for all $v \notin S$, all data are unramified. Then
\[
  L^S(s,\pi \times \tau)=\prod_{v \in S} \gamma(s,\pi_v \times \tau_v,\psi_v) L^S(1-s,\pi^{\vee} \times \tau^{\vee}),
\]
where $L^S(s,\pi \times \tau)$ is the partial $L$-function with respect to $S$.
\end{enumerate}
\end{theorem}

\begin{proof}
The properties \ref{SP2nproperty-1}, \ref{SP2nproperty-2}, and \ref{SP2nproperty-3} are immediate. As in \cite{Kap13}, the hard part should be multiplicativity \ref{SP2nproperty-4}, which follows from Kaplan \cite[\S 8-\S 9]{Kap15}. The dependence of $\gamma$-factors \ref{SP2nproperty-5} is explained in \cite[\S 7]{Kap15}. The proof of stability of $\gamma$-factors \ref{SP2nproperty-6} in \cite{Zha17-2} is purely local, and go through in the case of local function fields.
Concerning Statement \ref{SP2nproperty-7}, we take a direct path, which bypass the use of the involution of M{\oe}glin, Vign\'eras, and Waldspurger as commented in \cite[\S 5]{Kap13}.
Let $\pi_v={\rm Ind}^G_{B}(\mu)$ and $\tau_v={\rm Ind}^{{\rm GL}_n(F)}_{B_{n}}(\chi)$ be irreducible generic unramified representations of $G$ and ${\rm GL}_n(F)$,
where $\mu$ and $\chi$ are unramified characters of $B$ and $B_{n}$. If
\[
 t={\rm diag}(a_1,\dotsm,a_r,a^{-1}_r,\dotsm,a^{-1}_1) \in A,
 \] 
 then $\mu(t)=\mu_1(t_1)\mu_2(t_2) \dotsm\mu_r(t_r)$. Let
 \[
   A_{\pi_v}={\rm diag}(\mu_1(\varpi),\dotsm,\mu_r(\varpi), 1, \mu^{-1}_r(\varpi),\dotsm,\mu^{-1}_1(\varpi)) \in {\rm SO}_{2r+1}(\mathbb{C})
 \]
 be the semisimple conjugacy class of ${\rm SO}_{2r+1}(\mathbb{C})$ associated with $\pi_v$. 
 Since the Satake parameters $A_{\pi_v}$ and $A_{\pi_v^{\vee}}$ are conjugate in the dual group ${\rm SO}_{2r+1}(\mathbb{C})$, we know that 
 $L(s,\pi_v \times \tau_v)=L(s, \pi_v^{\vee} \times \tau_v)$. To be more precisely, we attach to $\tau_v$ a semisimple conjugacy class in ${\rm GL}_n(\mathbb{C})$
 denoted by
 \[
  B_{\tau_v}={\rm diag}(\chi_1(\varpi),\dotsm,\chi_n(\varpi)).
 \]
 Then $L$-functions for unramified principal series representations are defined by
\begin{multline*}
   L(s,\pi_v \times \tau_v)=\frac{1}{\det(I_{2rn}-(A_{\pi_v} \otimes B_{\tau_v})q^{-s})}\\
   =\prod_{1 \leq k \leq n}  \frac{1}{1-\chi_k(\varpi)q^{-s}} \prod_{1 \leq i \leq r,1\leq j \leq n} \left( \frac{1}{1-\mu_i(\varpi)\chi_j(\varpi)q^{-s}} \cdot
   \frac{1}{1-\mu^{-1}_i(\varpi)\chi_j(\varpi)q^{-s}} \right).
\end{multline*}
 However, the contragredient representation $\pi_v^{\vee}$ is ${\rm Ind}^{G}_{B}(\mu^{-1})$ (cf. \cite[Chapter 4-\S 4 Fact (1)]{Kim04}) from which 
 we conclude that
  \[
   L(s,\pi_v^{\vee} \times \tau_v)=\frac{1}{\det(I_{(2r+1)n}-(A_{\pi_v^{\vee}} \otimes B_{\tau_v})q^{-s})}=\frac{1}{\det(I_{(2r+1)n}-(A^{-1}_{\pi_v} \otimes B_{\tau_v})q^{-s})} =L(s,\pi_v \times \tau_v). 
 \] 
Aided by unramified calculation of Rankin--Selberg by Ginzburg, Rallis, and Soudry \cite[\S 3.1]{GRS98}, Kaplan \cite[Theorem 1-(9)]{Kap15} deduces the following crude functional equation
from properties of Eisenstein series;
\[
 L^S(s,\pi \times \tau)=\prod_{v \in S} \gamma(s,\pi_v \times \tau_v,\psi_v)L^S(1-s,\pi \times \tau^{\vee}).
\]
We apply the equality $L(s,\pi_v\times\tau_v)=L(s,\pi_v^{\vee}\times \tau_v)$ to $\tau_v^{\vee}$ for each $v \notin S$ to get
\[
 L^S(1-s,\pi \times \tau^{\vee})=L^S(1-s,\pi^{\vee} \times \tau^{\vee}). \qedhere
\]
\end{proof}

Via \cite[Theorem 1.5]{Lom15}, a list of local properties in combination with their role in the global functional equation apparently characterize $\gamma$-factors on $G \times {\rm GL}_n(F)$.
The key ingredient to prove Theorem \ref{SP2n-equality} right below is the local to global result of Henniart--Lomel\'{\i} \cite[Proposition 3.1]{Lom15}.

\begin{theorem}
\label{SP2n-equality} 
The Rankin--Selberg $\gamma$-factor $\gamma(s,\pi \times \tau,\psi)$ agrees with Shahidi's $\gamma$-factor for $\pi \times \tau$ defined in \cite{Lom15}.
\end{theorem}

 \subsection{The local converse theorem}
 \label{Sp2n-localconverse}
 
 Many of results in \S \ref{Sp2n-localconverse} resemble those of \S \ref{SO2n+1-localconverse}, henceforth we only remark on the nature of differences or omit the proof most of the time.
 Our primary task is to formulate the local converse theorem for irreducible generic supercuspidal representations of $G$. Q. Zhang \cite{Zha18} proved the local converse theorem for ${\rm Sp}_{2r}(F)$
 using twisting by all generic representations of ${\rm GL}_k(F)$ with $1 \leq k \leq r$. It turns out that the proof of Theorem  \ref{Sp2r-supercusp} is reduced to the main theorem of  \cite{Zha18}.
 Consequently we only need need to twist by all generic supercuspidal representations of ${\rm GL}_k(F)$ in Theorem \ref{Sp2r-supercusp}.

 \begin{theorem}
 \label{Sp2r-supercusp}
Let $\pi$ and $\pi'$ be irreducible generic supercuspidal representations of $G$ with the common central character $\omega$. If
 \[
 \gamma(s,\pi \times \tau,\psi)=\gamma(s,\pi' \times \tau,\psi)
 \]
 for all irreducible generic supercuspidal representations $\tau$ of ${\rm GL}_k(F)$ and for all $k$ with $1 \leq k \leq r$, then $\pi \cong \pi'$. 
\end{theorem}


 We shall initiate the analysis for detecting the place of poles and zeros of twisted $\gamma$-factors to decide the concrete structure of the supercuspidal support for irreducible generic representations of $G$. Then the general case of the local converse theorem follows from the supercuspidal case. We need the following well-known Rallis Lemma concerning the pole of intertwining operators.
 
  \begin{lemma}$(${\rm Rallis Lemma, \cite[Lemma 4.11]{Tak15}}$)$
  \label{Rallis Lemma}
 Let $\tau$ be a character of $F^{\times}$. The highest pole of the intertwining operator
 $M(s,\tau,\psi^{-1})$ is achieved by 
 \[
 M(s,\tau,\psi^{-1}) f_s(\textbf{s}(w_0),1)
 \]
 as the sections $f_s$ are taken over those sections with ${\rm Supp}(f_s) \subseteq \widetilde{B}w_0\widetilde{B}$.
 \end{lemma}
The proof of Lemma \ref{metaplectic-gamma} explains that can be subdivided in accordance with the following cases;
\[
C^{\widetilde{\rm Sp}_{2n}}_{\psi}(s-1/2,\tau,w_0) \underset{\mathbb{C}[q^{\pm s}]^{\times}}{\sim}
 \begin{cases}
&\hspace{-.2cm} \dfrac{\gamma(2s-1,\tau,\mathrm{Sym}^2,\psi)}{\gamma(s,\tau,\psi)}, \quad \text{if $\tau$ is a character of $F^{\times}$;} \\
 &\hspace{-.2cm} \gamma(2s-1,\tau,\mathrm{Sym}^2,\psi), \quad \text{otherwise.} \\
 \end{cases}
\]
In this regard, we handle $n=1$ case independently. Hereafter until the end of this section, $L(s,\tau,{\rm Sym}^2)$ is the $L$-factor defined by Langlands--Shahidi pertaining to the symmetric square representation \cite{Lom09,Lom15,Lom16,Lom17}.

\begin{lemma}[Casselman--Shahidi Lemma]
\label{CS-SP2n}
Let $\tau$ be either an irreducible unitary supercuspidal representation of ${\rm GL}_n(F)$ with $n>1$ or a character of $F^{\times}$. The intertwining operator
\[
  \frac{1}{L(2s-1,\tau,{\rm Sym}^2)} M(s,\tau,\psi^{-1})
\]
is entire as a function of $s$.
\end{lemma}

\begin{proof}
We explore the case of irreducible unitary supercuspidal representations $\tau$ with $n > 1$.
Counting on \cite[Proposition 5.4]{Lom17} (cf. \cite[Corollary 7.6]{Sha90}) along with \cite[Lemma 6.1 and Theorem E]{Szp13},
it can be shown in the same manner as the proof of Proposition \ref{CS-SO2n} that $s=s_0$ is a pole of $M(s,\tau,\psi^{-1})$ if and only if $s=s_0$ contributes the pole of $L(2s-1,\tau,{\rm Sym}^2)$.

\par
We now assume that $\tau$ is a character of $F^{\times}$. 
Keeping Lemma \ref{Rallis Lemma} in mind, it is sufficient to show that
\[
\frac{1}{L(2s-1,\tau,{\rm Sym}^2)} M(s,\tau,\psi^{-1}) f_s(\textbf{s}(w_0),1) 
\]
is holomorphic with the function $ f_s$ supported on the big cell, ${\rm Supp}(f_s) \subseteq \widetilde{B}w_0\widetilde{B}$. 
For our convenience, we write $f_s(\tilde{g})$ for $f_s(\tilde{g},1)$. We have the Bruhat decomposition:
\[
  \textbf{s}( w^{-1}_0 {\textbf u}_1(b))\textbf{s}(w_0)=\textbf{s} \left( \begin{pmatrix} b^{-1} & 1 \\ & b \end{pmatrix} \right) \textbf{s}\left(w_0 \begin{pmatrix} 1 & -b^{-1} \\ & 1 \end{pmatrix}\right).
\]
It is extremely tedious to determin the precise factor $\epsilon$, though it is not so deep. Since we will not need the accurate information on $\epsilon$, we will leave $\epsilon$ for the rest of our computation. We can write
\begin{multline*}
  \int_F f_s(\textbf{s}( {\textbf m}_2(-1)w^{-1}_0 {\textbf u}_1(b))\textbf{s}(w_0))\, db= \int_F f_s\left(\textbf{s} \left( \begin{pmatrix} b^{-1} & 1 \\ & b \end{pmatrix} \right) \textbf{s}\left(w^{-1}_0 \begin{pmatrix} 1 & -b^{-1} \\ & 1 \end{pmatrix}\right) \right) \, db \\
  = \int_F \delta^{1/2}_B \left( \begin{pmatrix}b^{-1}& \\ & b \end{pmatrix} \right) |b^{-1}|^{s-1/2} \gamma^{-1}_{\psi}(b) \tau(b^{-1}) f_s\left( \textbf{s}\left(w^{-1}_0 \begin{pmatrix} 1 & -b^{-1} \\ & 1 \end{pmatrix} \right)\right) \, db.
\end{multline*}
Upon changing the additive measure $db$ to the multiplicative measure $d^{\times}b=db/|b|$, and then changing the variables $b^{-1} \mapsto b$,
the above integral can be expressed as
\begin{multline}
\label{coset-sum}
 \int_{F^{\times}} \epsilon \delta^{1/2}_B \left(\begin{pmatrix}b& \\ & b^{-1} \end{pmatrix} \right) |b|^{s-1/2} \gamma^{-1}_{\psi}(b) \tau(b) f_s\left( \textbf{s}\left(w^{-1}_0 \begin{pmatrix} 1 & -b \\ & 1 \end{pmatrix} \right) \right)  |b^{-1}|\, d^{\times}b\\
 = \int_{F^{\times}} \epsilon  |b|^{s-1/2} \gamma^{-1}_{\psi}(b) \tau(b) f_s\left( \textbf{s}\left(w^{-1}_0 \begin{pmatrix} 1 & -b \\ & 1 \end{pmatrix}\right) \right) \,d^{\times}b
\end{multline}
for some $\epsilon \in \{\pm 1 \}$. Let $b_1,b_2,\dotsm,b_{\ell}$ be a complete set of representatives ${F^{\times}}^2 \backslash F^{\times}$. In this way, we find that
the right hand side of \eqref{coset-sum} equals to
\[
 \sum_{i=1}^{\ell} \int_{{F^{\times}}^2} \epsilon_i |x|^{s-1/2} \tau(x)  \delta^{-1/2}_B \left(\begin{pmatrix}b_i& \\ & b^{-1}_i \end{pmatrix} \right)
 f_s\left( \textbf{s}\left(\begin{pmatrix} b_i & \\ & b^{-1}_i  \end{pmatrix}\right)\textbf{s}\left(w^{-1}_0 \begin{pmatrix} 1 & -xb_i \\ & 1 \end{pmatrix}\right) \right) \,d^{\times}x,
\]
for some $\epsilon_i \in \{ \pm 1 \}$. Owing to  \cite[Lemma 4.12]{Tak15}, each summand can be converted into
\[
\epsilon_i  \delta^{-1/2}_B \left(\begin{pmatrix}b_i& \\ & b_i^{-1} \end{pmatrix} \right)
 f_s\left( \textbf{s}\left(\begin{pmatrix} b_i & \\ & b^{-1}_i  \end{pmatrix})\textbf{s}(w^{-1}_0 \begin{pmatrix} 1 & -xb_i \\ & 1 \end{pmatrix}\right)\right)=\sum_{\lambda,\phi} \lambda(s)\phi(x)
\]
for some holomorphic functions $\lambda$ and smooth compactly supported function $\phi$. Therefore, the study of the analytic behavior of ensuing integrals boils down to the study of that of
\[
 \int_{{F^{\times}}^2}  |x|^{s-1/2} \tau(x) \phi(x) d^{\times} x=c \int_{F^{\times}} |y|^{2s-1} \tau^2(y) \phi(y) d^{\times} y
\]
for an appropriate non-zero constant $c$. But the last integral is precisely an entire function on $s$ times Tate's $L$-factor, $L(2s-1,\tau^2)=L(2s-1,\tau,\rm{Sym}^2)$.
\end{proof}


We state and prove the following corollary, which will be a part of Proposition \ref{PoleDecompositionSympletic} we look into shortly.

\begin{corollary}
\label{sp2n-zerospoles}
 Let $\pi_0$ be an irreducible supercuspidal representation of ${\rm Sp}_{2r}(F)$ and $\tau$ an irreducible unitary supercuspidal representation of ${\rm GL}_n(F)$. 
If the twisted gamma factor $\gamma(s,\pi_0\times \tau,\psi)$ has a pole at $s=1$, then the pole must be simple and the representation $\tau$ must be self-dual.
Moreover, $L(s,\tau,{\rm Sym}^2)$ has a pole at $s=0$. 
\end{corollary}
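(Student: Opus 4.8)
The plan is to mirror the argument proving Proposition \ref{PoleDecomposition}-\ref{PoleDecomposition-3}, replacing the exterior square data by the symmetric square data and the Langlands--Shahidi exterior square local coefficient by the metaplectic local coefficient $C^{\widetilde{\rm Sp}_{2n}}_\psi$ identified in Lemma \ref{metaplectic-gamma}. First I would unwind the definition of $\gamma(s,\pi_0\times\tau,\psi)$ through the normalized intertwining operator $N(s,\tau,\psi^{-1})=C^{\widetilde{\rm Sp}_{2n}}_\psi(s-1/2,\tau,w_0)M(s,\tau,\psi^{-1})$, so that, for suitable $W$, $\phi$, $f_s$, the local functional equation reads
\[
 \omega_{\pi_0}(-1)^n\omega_\tau(-1)^r\gamma(s,\pi_0\times\tau,\psi)\,\Psi(W,\phi,f_s)=C^{\widetilde{\rm Sp}_{2n}}_\psi(s-1/2,\tau,w_0)\,\Psi(W,\phi,M(s,\tau,\psi^{-1})f_s).
\]
Next I would invoke Lemma \ref{metaplectic-gamma} in the subdivided form recorded after it: near $s=1$ the factor $C^{\widetilde{\rm Sp}_{2n}}_\psi(s-1/2,\tau,w_0)$ equals $\gamma(2s-1,\tau,\mathrm{Sym}^2,\psi)$ up to a function which is holomorphic at $s=1$ (a monomial in $q^{-s}$ when $\tau$ is ramified, and $\gamma(s,\tau,\psi)^{-1}$ when $\tau$ is an unramified character, which is holomorphic at $s=1$). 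Substituting the Shahidi functional equation $\gamma(2s-1,\tau,\mathrm{Sym}^2,\psi)=\varepsilon(2s-1,\tau,\mathrm{Sym}^2,\psi)\,L(2(1-s),\tau^\vee,\mathrm{Sym}^2)/L(2s-1,\tau,\mathrm{Sym}^2)$ and applying the Casselman--Shahidi Lemma \ref{CS-SP2n} to see that $M(s,\tau,\psi^{-1})f_s/L(2s-1,\tau,\mathrm{Sym}^2)$ is an entire family of sections, this becomes, near $s=1$,
\[
 \gamma(s,\pi_0\times\tau,\psi)\,\Psi(W,\phi,f_s)=(\text{holomorphic near }s=1)\cdot L(2(1-s),\tau^\vee,\mathrm{Sym}^2)\cdot\Psi\!\left(W,\phi,\tfrac{M(s,\tau,\psi^{-1})f_s}{L(2s-1,\tau,\mathrm{Sym}^2)}\right).
\]

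I would then pin down the analytic behaviour of the two remaining factors on the right. By Gelbart--Piatetski-Shapiro \cite{GP87} for $n=r$ and Ginzburg--Rallis--Soudry \cite{GRS98} for $n<r$ — in the same way that \cite{Sou93} is used in the orthogonal case — one can choose $W\in\mathcal{W}(\pi_0,\psi_U)$, $\phi\in\mathcal{S}(F^n)$ and $f_s$ with $\Psi(W,\phi,f_s)$ a non-zero constant; moreover, since $\pi_0$ is supercuspidal, the zeta integral $\Psi(W,\phi,\cdot)$ attached to a holomorphic section is a polynomial in $q^{\pm s}$, hence entire. Thus the last factor above is entire near $s=1$, and a pole of $\gamma(s,\pi_0\times\tau,\psi)$ at $s=1$ must come from a pole of $L(2(1-s),\tau^\vee,\mathrm{Sym}^2)$ there, i.e.\ from a pole of $L(s',\tau^\vee,\mathrm{Sym}^2)$ at $s'=0$. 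Since $\tau$ (hence $\tau^\vee$) is unitary supercuspidal, $L(s',\tau^\vee,\mathrm{Sym}^2)$ is holomorphic for $\mathrm{Re}(s')>0$ by the holomorphy of tempered $L$-functions (\cite[\S 4.1-(ix)]{Lom15}, via Theorem \ref{SP2n-equality}), and at $s'=0$ it has at most a simple pole because $L(s',\tau,\mathrm{Sym}^2)\,L(s',\tau,\wedge^2)=L(s',\tau\times\tau)$ has at most a simple pole at $s'=0$. This forces the pole of $\gamma(s,\pi_0\times\tau,\psi)$ at $s=1$ to be simple and shows $L(s',\tau^\vee,\mathrm{Sym}^2)$ has a pole at $s'=0$; the resulting pole of $L(s',\tau^\vee\times\tau^\vee)$ at $s'=0$ forces $\tau^\vee\cong\tau$, so $\tau$ is self-dual and $L(s,\tau,\mathrm{Sym}^2)=L(s,\tau^\vee,\mathrm{Sym}^2)$ carries a pole at $s=0$, which is the last assertion.

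The step I expect to be the main obstacle is the control of the metaplectic local coefficient $C^{\widetilde{\rm Sp}_{2n}}_\psi(s-1/2,\tau,w_0)$ near $s=1$ in the two regimes — ramified $\tau$ versus unramified character — together with the verification that it differs from $\gamma(2s-1,\tau,\mathrm{Sym}^2,\psi)$ only by a factor holomorphic at $s=1$; this is exactly what Lemma \ref{metaplectic-gamma} and its subdivision provide, so once that input is in place the remaining bookkeeping is routine. A secondary point requiring care is the polynomiality in $q^{\pm s}$ of the supercuspidal zeta integral $\Psi(W,\phi,\cdot)$ on holomorphic sections — needed so that applying $\Psi(W,\phi,\cdot)$ to an entire section yields an entire function — which follows from the compact support modulo $ZU$ of the matrix coefficients of $\pi_0$ exactly as in the Rankin--Selberg theory over $p$-adic fields, the argument being insensitive to the characteristic of $F$.
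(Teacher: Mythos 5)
Your proof matches the paper's: normalize $\gamma(s,\pi_0\times\tau,\psi)$ through the metaplectic local coefficient identified in Lemma \ref{metaplectic-gamma}, render the intertwined section holomorphic via the Casselman--Shahidi Lemma \ref{CS-SP2n}, use the non-vanishing results of \cite{GRS98} to choose data with $\Psi(W,\phi,f_s)$ a nonzero constant, and extract self-duality from the factorization $L(s,\tau,\wedge^2)L(s,\tau,{\rm Sym}^2)=L(s,\tau\times\tau)$ of \cite[Lemma 7.12]{Lom09}. You are somewhat more careful than the paper at two points: you make the simplicity of the pole explicit by bounding the pole order of $L(s,\tau,{\rm Sym}^2)$ at $s=0$ by that of $L(s,\tau\times\tau)$, a step the paper leaves implicit; and in the unramified-character case you observe directly that $\gamma(s,\tau,\psi)^{-1}$ is holomorphic at $s=1$, which is cleaner than the paper's cancellation remark about poles of $L(1-s,\tau)$ and $L(2(1-s),\tau^2)$ and, unlike that remark, also accounts for a nontrivial unramified quadratic $\tau$ (where $L(2(1-s),\tau^2)$ alone carries a pole).
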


\begin{proof}
In perspective of definition of $\gamma(s,\pi_0 \times \tau,\psi)$, we have
\[
\begin{split}
\gamma(s,\pi_0 \times \tau,\psi)\Psi(W,f_s) &\underset{\mathbb{C}[q^{\pm s}]^{\times}}{\sim} \dfrac{\gamma(2s-1,\tau,\mathrm{Sym}^2,\psi)}{\gamma(s,\tau,\psi)}\Psi(W,M(s,\tau,\psi^{-1})f_s)\\
&\underset{\mathbb{C}[q^{\pm s}]^{\times}}{\sim} L(s,\tau)  \cdot  \frac{L(2(1-s),\tau,{\rm Sym}^2)}{L(1-s,\tau)}  \cdot \frac{\Psi(W,M(s,\tau,\psi^{-1})f_s)}{L(2s-1,\tau,{\rm Sym}^2)}.
\end{split} 
\]
Non-vanishing results \S 3.2 and Proposition 6.6 of \cite{GRS98} assure that
there exist a Whittaker function $W \in \mathcal{W}(\pi_0,\psi_{U})$ and a function $f_s\in  V^{\widetilde{\rm Sp}_{2n}}_{Q_{2n}}(s,\mathcal{W}(\tau,\psi^{-1}))$
such that $\Psi(W,f_s)$ is a non-zero constant.
Appealing to Proposition \ref{CS-SP2n}, the occurrence of poles of $\gamma(s,\pi_0 \times \tau,\psi)$ at $s=1$ is amount to saying the appearance of the pole of $L(s,\tau,\rm{Sym}^2)$ at $s=0$. Lastly, the factorization $L(s,\tau,\wedge^2)L(s,\tau,{\rm Sym}^2)=L(s,\tau \times \tau)$ \cite[Lemma 7.12]{Lom09} manifests that $\tau \cong \tau^{\vee}$, as required.
\end{proof}

There are various types of Jacquet's subqotient theorem which can be founded in the current literature such as \cite[Proposition 3.19, p.34]{BZ76} for the ${\rm GL}_n$ case and \cite[\S 2.1 Claim, p.55]{Ber92} for the connective reductive case.  Our case is the positive characteristic analogue of \cite[p.1111]{Liu11} and we summarize it in the following manner.

\begin{lemma}\cite[\S 3]{Liu11}
\label{SPcuspsupport}
Let $\pi$ be an irreducible admissible generic representation of ${\rm Sp}_{2r}(F)$. Then $\pi$ is a subquotient of 
the unitary induced representation 
\[
  {\rm Ind}^{{\rm Sp}_{2r}(F)}_{\mathcal{P}_{2r}}(\rho_1|\det|^{z_1} \otimes \dotsm \otimes \rho_t|\det|^{z_t}\otimes \pi_0),
\]
where $\mathcal{P}_{2r}$ is the standard parabolic subgroup with its Levi part isomorphic to $\prod_{i=1}^t {\rm GL}_{r_i}(F) \times {\rm Sp}_{2r_0}(F)$
associated to the partition $(r_1,\dotsm,r_t,r_0)$ of $r$, each $\rho_i$ is an irreducible unitary supercuspidal representation of ${\rm GL}_{r_i}(F)$
with the $z_i$ real and ordered so that $z_1 \geq z_2 \geq \dotsm \geq z_t \geq 0$, and $\pi_0$ is an irreducible generic supercuspidal representation of $ {\rm Sp}_{2r_0}(F)$.
\end{lemma}

We collect the fairly standard results from \cite[Proposition 2.9]{DNS15} and Corollary \ref{sp2n-zerospoles} that will be required soon after.

\begin{proposition} 
\label{PoleDecompositionSympletic}
Let $\tau$ be an irreducible unitary supercuspidal representation of ${\rm GL}_n(F)$. With the notation as in Lemma \ref{SPcuspsupport}, we have the followings;
\begin{enumerate}[label=$(\mathrm{\roman*})$]
\item\label{PoleDecompositionSympletic-1} If the product $\prod_{i=1}^t \gamma(s+z_i,\rho_i \times \tau,\psi)$ has a real pole $($respectively, a real zero$)$ at $s=s_0$, then $\tau \cong \rho_i^{\vee}$
and $s_0=1-z_i$ $($respectively, $s_0=-z_i)$ for some $1 \leq i \leq t$.
\item\label{PoleDecompositionSympletic-2} If the product $\prod_{i=1}^t \gamma(s-z_i,\rho_i^{\vee} \times \tau,\psi)$ has a real pole $($respectively, a real zero$)$ at $s=s_0$, then $\tau \cong \rho_i$
and $s_0=1+z_i$ $($respectively, $s_0=z_i)$ for some $1 \leq i \leq t$.
\item\label{PoleDecompositionSympletic-3} The factor $\gamma(s,\pi_0 \times \tau,\psi)$ has no zero for ${\rm Re}(s) > 0$. 
 If a real pole of $\gamma(s,\pi_0 \times \tau,\psi)$ occurs at $s=s_0$, then the pole must be a simple pole at $s_0=1$ and $\tau \cong \tau^{\vee}$. 
\end{enumerate}
\end{proposition}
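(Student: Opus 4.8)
The plan is to assemble the statement from the analytic behaviour of Rankin--Selberg $\gamma$-factors for general linear groups, recorded in \cite[Proposition~2.5]{DNS15}, together with the zeta-integral computation already carried out in the proof of Corollary~\ref{sp2n-zerospoles}. For parts (i) and (ii) the relevant factors $\gamma(s',\rho_i\times\tau,\psi)$ are Jacquet--Piatetski-Shapiro--Shalika factors on ${\rm GL}_{r_i}(F)\times{\rm GL}_n(F)$, and the first step is the single-factor statement: for irreducible unitary supercuspidal $\rho_i$ and $\tau$ one has $\gamma(s',\rho_i\times\tau,\psi)=\varepsilon(s',\rho_i\times\tau,\psi)\,L(1-s',\rho_i^{\vee}\times\tau^{\vee})/L(s',\rho_i\times\tau)$ with $\varepsilon(s',\rho_i\times\tau,\psi)$ a nonzero monomial in $q^{-s'}$, while $L(s',\rho_i\times\tau)\equiv 1$ unless $r_i=n$ and $\tau\cong\rho_i^{\vee}$, in which case it has a simple pole at $s'=0$. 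Hence $\gamma(s',\rho_i\times\tau,\psi)$ has a (necessarily simple) pole only at $s'=1$ and only when $\tau\cong\rho_i^{\vee}$, and a zero only at $s'=0$ and only when $\tau\cong\rho_i^{\vee}$. Since a product of meromorphic functions can acquire a pole (respectively a zero) at a point only if some factor does, substituting $s'=s+z_i$ (respectively $s'=s-z_i$) and reading off the location produces (i) and (ii).

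For part (iii), I would first invoke Theorem~\ref{SP2n-equality} to identify $\gamma(s,\pi_0\times\tau,\psi)$ with Shahidi's $\gamma$-factor and hence write $\gamma(s,\pi_0\times\tau,\psi)=\varepsilon(s,\pi_0\times\tau,\psi)\,L(1-s,\pi_0^{\vee}\times\tau^{\vee})/L(s,\pi_0\times\tau)$ with $\varepsilon$ a nonzero monomial. As $\pi_0$ and $\tau$ are supercuspidal, hence tempered, the tempered $L$-function $L(s,\pi_0\times\tau)$ is holomorphic for ${\rm Re}(s)>0$ by \cite[\S 4.1-(ix)]{Lom15}; since $L$-functions never vanish, this already shows that $\gamma(s,\pi_0\times\tau,\psi)$ has no zero for ${\rm Re}(s)>0$ and that every pole of it lies in ${\rm Re}(s)\geq 1$.

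To pin a real pole to $s_0=1$, establish its simplicity, and force $\tau\cong\tau^{\vee}$, I would reuse the zeta-integral identity from the proof of Corollary~\ref{sp2n-zerospoles}. Choosing $W\in\mathcal{W}(\pi_0,\psi_U)$, $\phi\in\mathcal{S}(F^n)$, and $f_s$ so that $\Psi(W,\phi,f_s)$ is a nonzero constant, and using the Casselman--Shahidi Lemma~\ref{CS-SP2n} to write $\Psi(W,\phi,M(s,\tau,\psi^{-1})f_s)=E(s)\,L(2s-1,\tau,{\rm Sym}^2)$ with $E$ entire, one gets, up to a monomial in $q^{-s}$,
\[
 \gamma(s,\pi_0\times\tau,\psi)\;\sim\;L(s,\tau)\,\frac{L(2(1-s),\tau,{\rm Sym}^2)}{L(1-s,\tau)}\,E(s).
\]
The Godement--Jacquet factor $L(s,\tau)$ is holomorphic and nonvanishing for ${\rm Re}(s)>0$, $E$ is entire, and the only possible real pole of $L(u,\tau,{\rm Sym}^2)$ is a simple pole at $u=0$; hence the only real pole of $\gamma(s,\pi_0\times\tau,\psi)$ with ${\rm Re}(s)>0$ is a simple pole at $s_0=1$. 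Finally the pole of $L(u,\tau,{\rm Sym}^2)$ at $u=0$ forces $\tau\cong\tau^{\vee}$ through the factorization $L(s,\tau,\wedge^2)L(s,\tau,{\rm Sym}^2)=L(s,\tau\times\tau)$ of \cite[Lemma~7.12]{Lom09}, as $L(s,\tau\times\tau)$ has a pole at $s=0$ exactly when $\tau$ is self-dual.

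The main obstacle I expect is the bookkeeping when $\tau$ is a (possibly ramified) character of $F^{\times}$: then $L(s,\tau)$ and $L(1-s,\tau)$ are Tate factors that themselves contribute near $s=1$, ${\rm Sym}^2\tau=\tau^2$, and one must verify that a unitary character has a \emph{real} Tate pole only when it is trivial, so that a real pole of $L(2(1-s),\tau^2)$ genuinely forces $\tau^2$ to be trivial, i.e.\ $\tau\cong\tau^{\vee}$; the cancellations among the Tate factors at $s=1$ must be tracked just as in the closing paragraph of the proof of Corollary~\ref{sp2n-zerospoles}. Making the relation $\sim$ precise and separating the $n<r$ and $n=r$ forms of the local integral are further routine points, after which (i)--(iii) follow with no new input beyond \cite[Proposition~2.5]{DNS15} and Corollary~\ref{sp2n-zerospoles}.
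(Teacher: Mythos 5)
Your proof is correct and follows essentially the same route as the paper: parts (i) and (ii) reduce to the pole/zero analysis of Jacquet--Piatetski-Shapiro--Shalika $\gamma$-factors recorded in \cite[Proposition~2.5]{DNS15}, the non-vanishing in (iii) comes from Theorem~\ref{SP2n-equality} together with the holomorphy of tempered $L$-functions \cite[\S~4.1-(ix)]{Lom15}, and the location, simplicity, and self-duality at the pole come from the zeta-integral identity of Corollary~\ref{sp2n-zerospoles} via the Casselman--Shahidi Lemma~\ref{CS-SP2n}. Your writeup is in fact a bit more explicit than the paper's terse citation-style proof — in particular, pinning the real pole to $s_0=1$ by observing that the only real pole of $L(u,\tau,\mathrm{Sym}^2)$ is at $u=0$, and flagging the Tate-factor bookkeeping in the character case — but these are precisely the steps the paper leaves implicit, not a different approach.
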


\begin{proof}
The properties \ref{PoleDecompositionSympletic-1} and \ref{PoleDecompositionSympletic-2} are explained in \cite[Proposition 2.9]{DNS15}. In view of Theorem \ref{SP2n-equality}, the first part of the statement \ref{PoleDecompositionSympletic-3} follows from the holomorphy of tempered $L$-functions via Langlands--Shahidi methods \cite[\S 4.1-(ix)]{Lom15}. The second part of \ref{PoleDecompositionSympletic-3} can be otained from Corollary \ref{sp2n-zerospoles}.
\end{proof}

The following proposition seems to generalize results of \cite[Lemma 3.3 and Theorem 3.5]{Liu11}.

\begin{proposition}
 With the notation in Lemma \ref{SPcuspsupport}, suppose that $\rho'_i$ are irreducible unitary supercuspidal representations of ${\rm GL}_{r_i}(F)$ for $1 \leq i \leq t'$
 and $\pi'_0$ is an irreducible generic supercuspidal representation of ${\rm Sp}_{2r_0}(F)$ with $r=r'_0+\sum_{i=1}^{t'}r'_i$ and that $z'_1 \geq z'_2 \geq \dotsm \geq z'_t \geq 0$
 are real numbers. Suppose also that
\begin{multline*}
\left[\prod_{i=1}^t \gamma(s+z_i,\rho_i \times \tau,\psi)\gamma(s-z_i,\rho_i^{\vee} \times \tau,\psi) \right]\gamma(s,\pi_0 \times \tau,\psi)\\
 =\left[\prod_{i=1}^{t'} \gamma(s+z'_i,\rho'_i \times \tau,\psi)\gamma(s-z'_i,{\rho'_i}^{\vee} \times \tau,\psi) \right]\gamma(s,\pi'_0 \times \tau,\psi)
\end{multline*}
for all irreducible unitary supercuspidal representations $\tau$ of ${\rm GL}_n(F)$ with $1 \leq n \leq r$. Then $t=t'$ and there exists a permutation $\sigma$
of $\{ 1,2,\dotsm,t\}$ such that
\begin{enumerate}[label=$(\mathrm{\roman*})$]
\item $r_i=r'_{\sigma(i)}$ for all $i=1,2,\dotsm,t$;
\item $z_i=z'_{\sigma(i)}$ and $\rho_i \cong \rho_{\sigma(i)}$ for all $i=1,2,\dotsm,t$;
\item $\gamma(s,\pi_0 \times \tau,\psi)=\gamma(s,\pi'_0 \times \tau,\psi)$ for all irreducible unitary supercuspidal representations $\tau$ of ${\rm GL}_n(F)$ with $1 \leq n \leq r$. 
\end{enumerate}
\end{proposition}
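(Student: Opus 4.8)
The plan is to follow the proof of Proposition \ref{SO-factorization} almost verbatim, with Proposition \ref{PoleDecompositionSympletic} (whose last clause rests on Corollary \ref{sp2n-zerospoles}) playing the role there played by Proposition \ref{PoleDecomposition}, and to induct by peeling off matched twisting factors one at a time. The only structural difference is that $\pi_0$ and $\pi'_0$ are now irreducible generic supercuspidal representations of symplectic groups ${\rm Sp}_{2r_0}(F)$ and ${\rm Sp}_{2r'_0}(F)$, with the convention of Theorem \ref{SP2nproperty}-\ref{SP2nproperty-4} that the cuspidal datum is taken to be the trivial character of ${\rm GL}_1(F)$ when the corresponding rank is $0$; since Proposition \ref{PoleDecompositionSympletic} records precisely the same pattern of poles and zeros as in the odd orthogonal case, the combinatorial bookkeeping is unaffected.

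First I would locate the rightmost real pole of the common value of the two products in the displayed hypothesis. By Proposition \ref{PoleDecompositionSympletic}, a real pole of any factor appearing there can occur only at $s=1+z_i$ (from $\gamma(s-z_i,\rho_i^{\vee}\times\tau,\psi)$ with $\tau\cong\rho_i$), at $s=1-z_i\le 1$ (from $\gamma(s+z_i,\rho_i\times\tau,\psi)$ with $\tau\cong\rho_i^{\vee}$), or at $s=1$ (from $\gamma(s,\pi_0\times\tau,\psi)$ with $\tau$ self-dual), and likewise on the primed side; hence $s=1+z_1$ is the rightmost candidate. Taking $\tau=\rho_1$, the factor $\gamma(s-z_1,\rho_1^{\vee}\times\rho_1,\psi)$ has a pole at $s=1+z_1$, and by Proposition \ref{PoleDecompositionSympletic} no zero of another factor on the left can cancel it, so the right-hand side must have a pole there as well. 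Proposition \ref{PoleDecompositionSympletic} then forces one of
\[
 1+z_1=1-z'_i \text{ and } \rho_1\cong{\rho'_i}^{\vee}, \qquad 1+z_1=1+z'_i \text{ and } \rho_1\cong\rho'_i, \qquad 1+z_1=1 \text{ and } \rho_1\cong\rho_1^{\vee}
\]
for some $1\le i\le t'$. In the first two cases, after replacing $\rho'_i$ by ${\rho'_i}^{\vee}$ if necessary and permuting the primed data --- operations that leave the identity unchanged --- one gets $\rho_1\cong\rho'_1$ and $z_1=z'_1$, so the pair $\gamma(s+z_1,\rho_1\times\tau,\psi)\gamma(s-z_1,\rho_1^{\vee}\times\tau,\psi)$ occurs on both sides and cancels, leaving an identity of the same shape with $t$ and $t'$ each reduced by one. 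In the third case $z_1=\dots=z_t=0=z'_1=\dots=z'_{t'}$, and one argues exactly as in Proposition \ref{SO-factorization}: either $\rho_1$ is isomorphic to some $\rho'_i$ or ${\rho'_i}^{\vee}$, which reduces to the first two cases, or else the pole of the right-hand side at $s=1$ is simple (coming only from $\gamma(s,\pi'_0\times\tau,\psi)$ by Proposition \ref{PoleDecompositionSympletic}-\ref{PoleDecompositionSympletic-3}), while $\rho_1\cong\rho_1^{\vee}$ makes $\gamma(s,\rho_1\times\tau,\psi)\gamma(s,\rho_1^{\vee}\times\tau,\psi)=\gamma(s,\rho_1\times\rho_1^{\vee},\psi)^2$ carry at least a double pole at $s=1$ --- a contradiction.

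Iterating this reduction strips off one matched factor $\rho_i|\det|^{z_i}$ at a time; as the hypothesis is preserved at each stage, after finitely many steps all twisting factors are exhausted, whence $t=t'$, there is a permutation $\sigma$ with $r_i=r'_{\sigma(i)}$, $z_i=z'_{\sigma(i)}$, $\rho_i\cong\rho'_{\sigma(i)}$, and the residual identity $\gamma(s,\pi_0\times\tau,\psi)=\gamma(s,\pi'_0\times\tau,\psi)$ holds for all irreducible unitary supercuspidal $\tau$ of ${\rm GL}_n(F)$ with $1\le n\le r$. Just as on the orthogonal side, the substantive point --- and the only real obstacle --- is the fine analysis of the poles and zeros of $\gamma(s,\pi_0\times\tau,\psi)$: that a real pole is simple, sits only at $s=1$, and forces $\tau$ self-dual, and that $\gamma(s,\pi_0\times\tau,\psi)$ has no zero for ${\rm Re}(s)>0$. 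This is exactly Corollary \ref{sp2n-zerospoles} together with Proposition \ref{PoleDecompositionSympletic}-\ref{PoleDecompositionSympletic-3}; everything else is the same combinatorics as in Proposition \ref{SO-factorization}, to whose proof I would defer for the remaining routine details.
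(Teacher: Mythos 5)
Your proposal is correct and follows exactly the route the paper intends: the paper states this proposition without proof, expecting the reader to transport the proof of Proposition \ref{SO-factorization} word for word, using Proposition \ref{PoleDecompositionSympletic} (backed by Corollary \ref{sp2n-zerospoles}) in place of Proposition \ref{PoleDecomposition}. Your accounting of the three cases, the peeling-off induction, and the double-pole contradiction in the self-dual case matches the orthogonal argument precisely, so there is nothing to add.
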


We are finally ready to reveal the main result of this section. The compatible result can be carried over to $p$-adic fields in a parallel way.

\begin{theorem}
Let $\pi$ and $\pi'$ be irreducible admissible generic representations of $G$ with the common central character $\omega$. If
 \[
 \gamma(s,\pi \times \tau,\psi)=\gamma(s,\pi' \times \tau,\psi)
 \]
 for all irreducible generic supercuspidal representations $\tau$ of ${\rm GL}_n(F)$ and for all $n$ with $1 \leq n \leq r$, then $\pi \cong \pi'$. 
\end{theorem}

Having Theorem \ref{SP2n-equality} in hand, we construct the injectivity of the local Langlands lift or transfer. 

 \begin{corollary}
 \label{SP2n-transfer}
  Let ${\rm Irr}^{\rm gen}_{\rm cusp}(G)$ be the set of isomorphic classes of irreducible generic supercuspidal representations of $G$
 and let ${\rm Irr}^{\rm isob}_{\rm cusp}({\rm GL}_{2r+1}(F))$ be the set of isomorphic classes of irreducible normalized induced representations
 of ${\rm GL}_{2r+1}(F)$ of the form ${\rm Ind}(\rho_1 \otimes \dotsm \otimes \rho_t)$ where each $\rho_i$ is an irreducible supercuspidal representation of
 ${\rm GL}_{r_i}(F)$ such that $L(s,\rho_i,{\rm Sym}^2)$ has a pole at $s=0$ and $\rho_i \not\cong \rho_j$ for $i \neq j$. Then there is a unique injective map
 \[
  \ell : {\rm Irr}^{\rm gen}_{\rm cusp}(G) \rightarrow {\rm Irr}^{\rm isob}_{\rm cusp}({\rm GL}_{2r+1}(F))
 \]
 satisfying
 \[
  \gamma(s,\pi \times \tau,\psi)=\gamma(s,\ell(\pi)\times\tau,\psi)
 \]
 for any irreducible supercuspidal representation $\tau$ of ${\rm GL}_n(F)$ with $1 \leq n \leq r$.
 \end{corollary}

We again expect that the local Langlands transfer $\ell$ to ${\rm GL}_{2r+1}(F)$ is surjective.


\begin{acknowledgements}
The author is indebted to Q. Zhang for many stimulus discussions throughout this project. 
I am very grateful to M. Krishnamurthy for explain the genesis of the Langalands--Shahidi method countless times.
I would like to convey my deepest gratitude to J. Cogdell and F. Shahidi for incredible suggestions to apply
Cogdell, Shahidi, and Tsai's theory \cite{CST17} to other situation. Without their instruction, this article never have seen the light of the world.
I also thank to every members in Algebra and Number Theory group at the University of Maine, especially, J. Buttcane, B. Hanson, and A. Knightly for providing vibrant environment and their constant encouragement, while the paper was made. The author would like to convey our sincere appreciation to anonymous referee
for reading our paper and invaluable comments which significantly improve the exposition of this manuscript.
\end{acknowledgements}

\begin{funding statement}
This work was supported by the Ewha Womans University Research Grant of 2022 and
 the National Research Foundation of Korea (NRF) grant 
funded by the Korea government (No. RS-2023-00209992). 
\end{funding statement}

 \bibliographystyle{amsplain}

\begin{bibdiv}
\begin{biblist}

\bib{BZ76}{article}{
   author={Bern\v{s}te\u{\i}n, I. N.},
   author={Zelevinski\u{\i}, A. V.},
   title={Representations of the group $GL(n,F),$ where $F$ is a local
   non-Archimedean field},
   journal={Russian Math. Surveys},
   volume={31},
   date={1976},
   number={3},
   pages={1--68},
  }
  
  \bib{Ber92}{article}{
      author={Bernstein, Joseph},
       title={Draft of: Representations of $p$-adic
groups},
        date={1992},
        journal={Lecture notes, Written by Karl H. Rumelhart},
         pages={available at \url{http://www.math.tau.ac.il/~bernstei/Unpublished_texts/unpublished_texts/Bernstein93new-harv.lect.from-chic.pdf}},
}

\bib{CS98}{article}{
   author={Casselman, William},
   author={Shahidi, Freydoon},
   title={On irreducibility of standard modules for generic representations},
   language={English, with English and French summaries},
   journal={Ann. Sci. \'{E}cole Norm. Sup. (4)},
   volume={31},
   date={1998},
   number={4},
   pages={561--589},
}

\bib{Che06}{article}{
   author={Chen, Jiang-Ping Jeff},
   title={The $n\times(n-2)$ local converse theorem for ${\rm GL}(n)$ over a
   $p$-adic field},
   journal={J. Number Theory},
   volume={120},
   date={2006},
   number={2},
   pages={193--205},
}

 \bib{Cha19}{article}{
   author={Chai, Jingsong},
   title={Bessel functions and local converse conjecture of Jacquet},
   journal={J. Eur. Math. Soc. (JEMS)},
   volume={21},
   date={2019},
   number={6},
   pages={1703--1728},
}

\bib{Cog14}{article}{
   author={Cogdell, J. W.},
   title={Piatetski-Shapiro's work on converse theorems},
   conference={
      title={Automorphic forms and related geometry: assessing the legacy of
      I. I. Piatetski-Shapiro},
   },
   book={
      series={Contemp. Math.},
      volume={614},
      publisher={Amer. Math. Soc., Providence, RI},
   },
   date={2014},
   pages={31--51},
  }

\bib{CPS98}{article}{
   author={Cogdell, J. W.},
   author={Piatetski-Shapiro, I. I.},
   title={Stability of gamma factors for ${\rm SO}(2n+1)$},
   journal={Manuscripta Math.},
   volume={95},
   date={1998},
   number={4},
   pages={437--461},
   }

\bib{CST17}{article}{
   author={Cogdell, J. W.},
   author={Shahidi, F.},
   author={Tsai, T.-L.},
   title={Local Langlands correspondence for ${\rm GL}_n$ and the exterior
   and symmetric square $\varepsilon$-factors},
   journal={Duke Math. J.},
   volume={166},
   date={2017},
   number={11},
   pages={2053--2132},
   }

   \bib{GGP12}{article}{
   author={Gan, Wee Teck},
   author={Gross, Benedict H.},
   author={Prasad, Dipendra},
   title={Symplectic local root numbers, central critical $L$ values, and
   restriction problems in the representation theory of classical groups},
   language={English, with English and French summaries},
   note={Sur les conjectures de Gross et Prasad. I},
   journal={Ast\'{e}risque},
   number={346},
   date={2012},
   pages={1--109},
}
   
    \bib{GL18}{article}{
   author={Gan, Wee Teck},
   author={Lomel\'{\i}, Luis},
   title={Globalization of supercuspidal representations over function
   fields and applications},
   journal={J. Eur. Math. Soc. (JEMS)},
   volume={20},
   date={2018},
   number={11},
   pages={2813--2858},
 }

 \bib{GP87}{book}{
   author={Gelbart, Stephen},
   author={Piatetski-Shapiro, Ilya},
   title={L-functions for $G \times GL(n)$, in Explicit constructions of automorphic $L$-functions},
   series={Lecture Notes in Mathematics},
   volume={1254},
   publisher={Springer-Verlag, Berlin},
   date={1987},
   pages={vi+152},
}

 \bib{GRS98}{article}{
   author={Ginzburg, David},
   author={Rallis, Stephen},
   author={Soudry, David},
   title={$L$-functions for symplectic groups},
   language={English, with English and French summaries},
   journal={Bull. Soc. Math. France},
   volume={126},
   date={1998},
   number={2},
   pages={181--244},
   }
   
   \bib{GSR99}{article}{
   author={Ginzburg, David},
   author={Rallis, Stephen},
   author={Soudry, David},
   title={On a correspondence between cuspidal representations of ${\rm
   GL}_{2n}$ and $\widetilde{\rm Sp}_{2n}$},
   journal={J. Amer. Math. Soc.},
   volume={12},
   date={1999},
   number={3},
   pages={849--907},
  }
  
   \bib{HKK23}{article}{
  author={Haan, Jaeho},
  author={Kim, Yeansu},
   author={ Kwon, Sanghoon},
      title={\it The local converse theorem for quasi-split ${\rm O}_{2n}$ and ${\rm SO}_{2n}$},
    date={2023},
   pages={available at \url{https://arxiv.org/abs/2301.12693}},
  }
  
  \bib{Hen93}{article}{
   author={Henniart, Guy},
   title={Caract\'erisation de la correspondance de Langlands locale par les
   facteurs $\epsilon$ de paires},
   language={French, with English and French summaries},
   journal={Invent. Math.},
   volume={113},
   date={1993},
   number={2},
   pages={339--350},
   }

 \bib{Howe}{article}{
   author={Howe, Roger},
   title={Classification of Irreducible Representations of ${\rm GL}_2(F)$},
   journal={Preprint, I.H.E.S., Bures-sur-Yvette, France},
   date={1978},
   }

\bib{Jac75}{article}{
   author={Jacquet, Herv\'e},
   title={Sur les repr\'esentations des groupes r\'eductifs $p$-adiques},
   language={French, with English summary},
   journal={C. R. Acad. Sci. Paris S\'er. A-B},
   volume={280},
   date={1975},
   pages={Aii, A1271--A1272},
}

 \bib{JL18}{article}{
   author={Jacquet, Herv\'{e}},
   author={Liu, Baiying},
   title={On the local converse theorem for $p$-adic ${\rm GL}_n$},
   journal={Amer. J. Math.},
   volume={140},
   date={2018},
   number={5},
   pages={1399--1422},
}

\bib{JS03}{article}{
   author={Jiang, Dihua},
   author={Soudry, David},
   title={The local converse theorem for ${\rm SO}(2n+1)$ and applications},
   journal={Ann. of Math. (2)},
   volume={157},
   date={2003},
   number={3},
   pages={743--806},
  }

  \bib{JS04}{article}{
   author={Jiang, Dihua},
   author={Soudry, David},
   title={Generic representations and local Langlands reciprocity law for
   $p$-adic ${\rm SO}_{2n+1}$},
   conference={
      title={Contributions to automorphic forms, geometry, and number
      theory},
   },
   book={
      publisher={Johns Hopkins Univ. Press, Baltimore, MD},
   },
   date={2004},
   pages={457--519},
}
  
  \bib{JS13}{article}{
   author={Jiang, Dihua},
   author={Soudry, David},
   title={Appendix: On the local descent from ${\rm GL}(n)$ to classical
   groups $[$appendix to “Self-dual representations of division algebras and Weil groups: a contrast” by D. Prasad and D. Ramakishnan$]$},
   journal={Amer. J. Math.},
   volume={134},
   date={2012},
   number={3},
   pages={767--772},
  }

    \bib{DNS15}{article}{
   author={Jiang, Dihua},
   author={Nien, Chufeng},
   author={Stevens, Shaun},
   title={Towards the Jacquet conjecture on the local converse problem for
   $p$-adic ${\rm GL}_n$},
   journal={J. Eur. Math. Soc. (JEMS)},
   volume={17},
   date={2015},
   number={4},
   pages={991--1007},
 }
 
 \bib{Jo23}{article}{
   author={Jo, Yeongseong},
   title={Local exterior square and Asai $L$-functions for ${\rm GL}(n)$ in
   odd characteristic},
   journal={Pacific J. Math.},
   volume={322},
   date={2023},
   number={2},
   pages={301--340},
  }

\bib{Kap13}{article}{
   author={Kaplan, Eyal},
   title={Multiplicativity of the gamma factors of Rankin-Selberg integrals
   for $SO_{2l}\times GL_n$},
   journal={Manuscripta Math.},
   volume={142},
   date={2013},
   number={3-4},
   pages={307--346},
}

\bib{Kap13-2}{article}{
   author={Kaplan, Eyal},
   title={On the gcd of local Rankin-Selberg integrals for even orthogonal
   groups},
   journal={Compos. Math.},
   volume={149},
   date={2013},
   number={4},
   pages={587--636},
   }

  \bib{Kap15}{article}{
   author={Kaplan, Eyal},
   title={Complementary results on the Rankin-Selberg gamma factors of
   classical groups},
   journal={J. Number Theory},
   volume={146},
   date={2015},
   pages={390--447},
  }

  \bib{Kim04}{article}{
   author={Kim, Henry H.},
   title={Automorphic $L$-functions},
   conference={
      title={Lectures on automorphic $L$-functions},
   },
   book={
      series={Fields Inst. Monogr.},
      volume={20},
      publisher={Amer. Math. Soc., Providence, RI},
   },
   date={2004},
   pages={97--201},
}

  \bib{LM17}{article}{
   author={Lapid, Erez},
   author={Mao, Zhengyu},
   title={Whittaker-Fourier coefficients of cusp forms on $\widetilde{\rm
   Sp}_n$: reduction to a local statement},
   journal={Amer. J. Math.},
   volume={139},
   date={2017},
   number={1},
   pages={1--55},
  }
    
  \bib{Liu11}{article}{
   author={Liu, Baiying},
   title={Genericity of representations of $p$-adic ${\rm Sp}_{2n}$ and
   local Langlands parameters},
   journal={Canad. J. Math.},
   volume={63},
   date={2011},
   number={5},
   pages={1107--1136},
  }
  
  \bib{LH22}{article}{
  author={Liu, Baiying},
  author={Hazeltine, Alexander},
    title={\it On the local converse theorem for split ${\rm SO}_{2l}$},
    date={2023},
   pages={available at \url{https://arxiv.org/abs/2301.13847}},
  }

  \bib{LZ22}{article}{
   author={Liu, Baiying},
   author={Zhang, Qing},
   title={Gamma factors and converse theorems for classical groups over
   finite fields},
   journal={J. Number Theory},
   volume={234},
   date={2022},
   pages={285--332},
}

  \bib{Lom09}{article}{
   author={Lomel\'{\i}, Luis Alberto},
   title={Functoriality for the classical groups over function fields},
   journal={Int. Math. Res. Not. IMRN},
   date={2009},
   number={22},
   pages={4271--4335},
  }
  
  \bib{Lom15}{article}{
   author={Lomel\'{\i}, Luis Alberto},
   title={The $\mathcal {LS}$ method for the classical groups in positive
   characteristic and the Riemann hypothesis},
   journal={Amer. J. Math.},
   volume={137},
   date={2015},
   number={2},
   pages={473--496},
 }

\bib{Lom16}{article}{
   author={Lomel\'{\i}, Luis Alberto},
   title={On automorphic $L$-functions in positive characteristic},
   language={English, with English and French summaries},
   journal={Ann. Inst. Fourier (Grenoble)},
   volume={66},
   date={2016},
   number={5},
   pages={1733--1771},
 }
 
 \bib{Lom17}{article}{
   author={Lomel\'{\i}, Luis Alberto},
    title={\it The $\mathcal {LS}$ method over function fields: Ramanujan Conjecture and Riemann Hypothesis for the unitary groups},
    date={2017},
  pages={available at \url{https://arxiv.org/abs/1507.03625}},
 }

 \bib{Luo24}{article}{
   author={Luo, Caihua},
   title={Holomorphy of normalized intertwining operators for certain
   induced representations},
   journal={Math. Ann.},
   volume={389},
   date={2024},
   number={4},
   pages={4019--4053},
}

  \bib{Mez}{article}{
   author={Mezer, Dor},
   title={Multiplicity one theorems over positive characteristic},
   journal={Canad. J. Math.},
   volume={75},
   date={2023},
   number={3},
   pages={1018--1044},
  }

 \bib{Mor18}{article}{
   author={Morimoto, Kazuki},
   title={On the irreducibility of global descents for even unitary groups
   and its applications},
   journal={Trans. Amer. Math. Soc.},
   volume={370},
   date={2018},
   number={9},
   pages={6245--6295},
  }

  \bib{Rao93}{article}{
   author={Ranga Rao, R.},
   title={On some explicit formulas in the theory of Weil representation},
   journal={Pacific J. Math.},
   volume={157},
   date={1993},
   number={2},
   pages={335--371},
 }

  \bib{Sha90}{article}{
   author={Shahidi, Freydoon},
   title={A proof of Langlands' conjecture on Plancherel measures;
   complementary series for $p$-adic groups},
   journal={Ann. of Math. (2)},
   volume={132},
   date={1990},
   number={2},
   pages={273--330},
}

 \bib{Sou93}{article}{
   author={Soudry, David},
   title={Rankin-Selberg convolutions for ${\rm SO}_{2l+1}\times{\rm GL}_n$:
   local theory},
   journal={Mem. Amer. Math. Soc.},
   volume={105},
   date={1993},
   number={500},
   pages={vi+100},
   }  
   
   \bib{Sou20}{article}{
   author={Soudry, David},
   title={Full multiplicativity of gamma factors for ${\rm
   SO}_{2l+1}\times{\rm GL}_n$},
   journal={Proceedings of the Conference on $p$-adic Aspects of the
   Theory of Automorphic Representations (Jerusalem, 1998), Israel J. Math.},
   volume={120},
   date={2000},
   number={part B},
   pages={511--561},
  }
   
   \bib{Szp13}{article}{
   author={Szpruch, Dani},
   title={Some irreducibility theorems of parabolic induction on the
   metaplectic group via the Langlands-Shahidi method},
   journal={Israel J. Math.},
   volume={195},
   date={2013},
   number={2},
   pages={897--971},
  }

  \bib{Tak15}{article}{
   author={Takeda, Shuichiro},
   title={On a certain metaplectic Eisenstein series and the twisted
   symmetric square $L$-function},
   journal={Math. Z.},
   volume={281},
   date={2015},
   number={1-2},
   pages={103--157},
 }

 \bib{Zha17-1}{article}{
   author={Zhang, Qing},
   title={A local converse theorem for $\rm U(2,2)$},
   journal={Forum Math.},
   volume={29},
   date={2017},
   number={6},
   pages={1471--1497},
}

  \bib{Zha17-2}{article}{
   author={Zhang, Qing},
   title={Stability of Rankin-Selberg gamma factors for ${\rm Sp}(2n)$,
   $\widetilde{\rm Sp}(2n)$ and ${\rm U}(n,n)$},
   journal={Int. J. Number Theory},
   volume={13},
   date={2017},
   number={9},
   pages={2393--2432},
   }

   \bib{Zha18}{article}{
   author={Zhang, Qing},
   title={A local converse theorem for ${\rm Sp}_{2r}$},
   journal={Math. Ann.},
   volume={372},
   date={2018},
   number={1-2},
   pages={451--488},
  }
  
   \bib{Zha19}{article}{
   author={Zhang, Qing},
   title={A local converse theorem for ${\rm U}_{2r+1}$},
   journal={Trans. Amer. Math. Soc.},
   volume={371},
   date={2019},
   number={8},
   pages={5631--5654},
  }

\end{biblist}
\end{bibdiv}

\end{document}